\newtheorem{thm}{Theorem}[section]
\newtheorem{defn}[thm]{Definition}
\newtheorem{lemma}[thm]{Lemma}
\newtheorem{ex}[thm]{Exercise}
\newtheorem{cor}[thm]{Corollary}
\newtheorem{prop}[thm]{Proposition}
\theoremstyle{definition}
\newtheorem{rmk}[thm]{Remark}
\newtheorem{eg}[thm]{Example}
\newtheorem{qn}[thm]{Question}
\numberwithin{equation}{section}
\def\<{\langle}
\def\>{\rangle}
\newtheorem*{oldproof}{Proof}
\renewenvironment{proof}[1][{}]{\begin{oldproof}[#1]}{\qed\end{oldproof}}
\title{\underline{Weak Topologies}}
\author{G. Ramesh}
\address{Department of Mathematics\\I. I. T. Hyderabad\\Kandi, Sangareddy, Telangana, India-502 285.}
\email{rameshg@math.iith.ac.in}
\date{\today}
\thanks{ This notes were written for Annual Foundation School (AFS) - II (2016) held at Kerala School of Mathematics, India. \\ I am thankful to Dr. Sudeshna Basu for her suggestions regarding  Theorem 5.1}
\begin{document}
\maketitle
\tableofcontents
\section{Topological preliminaries}

We discuss about the weak and weak star topologies on a normed linear space. Our aim is to prove the well known Banach-Alaouglu theorem and discuss some of its consequences, in particular,  characterizations of reflexive spaces.

In this section we briefly revise topological concepts that are required for our purpose.

\begin{defn}
Let $X$ be a non empty set and  $\tau \subseteq \mathcal P(X)$, the power set of $X$. Then  $\tau$ is said to be a topology on $X$ if the following conditions holds true:
\begin{enumerate}
\item $\emptyset,\; X\in \tau$
\item if $U_{\alpha}\in \tau,\; \alpha\in \Lambda$, then $\displaystyle \bigcup_{\alpha \in \Lambda}U_{\alpha}\in \tau$
\item if $F_i\in \tau$ for $i=1,2,\dots, n$, then $\displaystyle \bigcap_{i=1}^nF_i\in \tau$.
\end{enumerate}
The pair $(X,\tau)$ is said to be a topological space.
\end{defn}
The members of $\tau$ are called open sets.

Let $X$ be a non empty set. Then $\tau_{i}={\{{\{\emptyset}\}, X}\}$ and $\tau_d=\mathcal P(X)$  are topologies and are called as the indiscrete topology and the discrete topology, respectively. Note that if $\tau$ is any other topology on $X$, then $\tau_i\subset \tau \subset \tau_d$.

Let $\tau_1$ and $\tau_2$ be two topologies on $X$. Then we say that $\tau_1$ is weaker than $\tau_2$ if $\tau_1\subseteq \tau_2$. In this case, $\tau_2$ is said to be stronger than $\tau_1$. Clearly, the weak topology contains less number of open sets than the stronger topology.

We recall the following two occasions:
\begin{enumerate}
\item[(i)] Let $H=\ell^2$ and ${\{e_n:n\in \mathbb N}\}$ be the standard orthonormal basis for $H$. Recall that $e_n(m)=\delta_n(m)$, the Dirac delta function. Then for $m,n\in \mathbb N$, we have $\|e_n-e_m\|_2=\sqrt{2}$. Hence the sequence $(e_n)$ is not convergent.
\item The unit ball $B_{H}={\{x\in H:\|x\|_2=1}\}$ is not compact. In general, the unit ball $B_X$ in a normed linear space $X$ is compact if and only if the normed linear space $X$ is finite dimensional.
\end{enumerate}

The above examples suggests that in  infinite dimensional spaces, the compact sets and convergent sequences are a few. Since compact sets are very important in Mathematical Analysis and Applications, it is not suggestible to consider the norm topology. Hence we look at weaker topologies so that we can get more compact spaces and convergent sequences.

Before discussing the weak topologies on normed linear spaces we recall the notions of basis and subbasis and some relevant concepts which we need later.

A collection $\mathcal B\subseteq \mathcal P(X)$ is called a basis for a topology on $X$ if and only if
\begin{enumerate}
\item for every $x\in X$, there exists $B\in \mathcal B$ such that $x\in B$
\item if $B_1,B_2\in \mathcal B$ and $x\in B_1\cap B_2$, then there exists $B\in \mathcal B$ such that $x\in B\subseteq B_1\cap B_2$.
\end{enumerate}

The topology generated by $\mathcal B$ is given by the following rule:\\
A set $U$ in $X$ is said to be open in $X$ if for all $x\in U$, there exists $B\in \mathcal B$ such that $x\in B\subseteq U$.

In this case the topology generated by $\mathcal B$ is given by $\tau_{\mathcal B}=\displaystyle \bigcup_{B\in \mathcal B} B  $.

Let $(X,\tau)$ be a topological space and $\mathcal B\subseteq \tau$. Then $\mathcal B$ is a basis for $\tau$ if for every $U\in\tau$ and $x\in U$, there
exists $B\in \mathcal B$ such that $x\in B\subseteq U$.

Let $\mathcal S\subset \mathcal P(X)$. Then the smallest topology containing $\mathcal S$ is the intersection of all topologies on $X$ containing $\mathcal S$ (the discrete topology is one such topology). In fact, this is nothing but the topology generated by $\mathcal S$.

Let $\mathcal B$ and $\mathcal B'$ be bases for the topologies $\tau$ and $\tau'$, respectively. Then the following are equivalent:
\begin{enumerate}
  \item $\tau \subseteq \tau'$
  \item if $B\in \mathcal B$ and $x\in B$, there exists $B' \in \mathcal B'$ such that $x\in B'\subseteq B$.
\end{enumerate}

\begin{defn} Let $X$ be a set. Then a sub-basis is a collection $\mathcal S\subseteq \mathcal P(X)$ such that $\displaystyle \bigcup_{S\in \mathcal S} S=X$. The
topology generated by $\mathcal S$  is defined by the rule: $ U\subseteq  X$ is open if for each $x\in  U$ we can  find  $S_1, S_2,\dots, S_n\in \mathcal S$  such that $x \in S_1 \cap  S_2\cap\dots \cap S_n  \subset U$.
\end{defn}

\begin{defn}
Let $X$ be a topological space with topology $\tau$. Then $\mathcal S\subseteq \tau$ is a sub-basis for
$X$ if for each open set $U$ and each $x\in U$, there exist finitely many $S_1,S_2,\dots, S_n \in \mathcal  S$ such that
$x \in S_1\cap S_2\cap \dots \cap S_n\subset U$.

\end{defn}

If $\mathcal S$ is a subbasis for a topology $\tau$ on $X$, then $$\mathcal B:=\Big\{ \bigcap_{\alpha \in F} S_{\alpha}: F \; \text{is a finite subset of}\;  \Lambda  \Big\}$$
forms a basis for $\tau$. This means that $$\tau=\Big\{\bigcup_{F\in \mathcal F} \; \bigcap_{S\in F}S: \mathcal F \subset  \mathcal S^{<w}\Big\}\bigcup {\{\emptyset, X}\},$$
where $ \mathcal S^{<w}$ is the set of all finite subsets of $\mathcal S$.

The topology generated by the subbasis $\mathcal S$ is the smallest topology containing $\mathcal S$.

Here we list out some more definitions and basic results:
\begin{enumerate}
\item In a topological space $(X,\tau)$ a set $N\subset X$ is said to be a neighbourhood of a point $x\in X$ if there exists an open set $U$ such that $x\in U\subset N$.\\
\item A subset $A$ of $X$ is closed, if  $A^{c}$, the  complement  of $A$ is open. Since the intersection of a collection of closed sets is closed, every subset $A$ of $X$ is contained in a unique minimal closed  set
$\overline{A}={\{x\in X: \text{every neighbourhood of} \; x\; \text{intersects}\; A}\}$, called the closure of $A$.

\item A topological space $(X,\tau)$ is said to be  \textit{Hausdorff} if  for any two points $x,y\in X$ with $x\neq y$, there exists two disjoint open sets $U_x$ and $U_y$ such that $x\in U_x$ and $y\in U_y$.\\

\item A sequence $(x_n)$ in $X$ is said to converge to $x\in X$, if for every open set $U$ of $x$, there exists $n_0\in \mathbb N$ such that $x_n\in U$ for all $n\geq n_0$. Note that if $X$ is Hausdorff, then $x_n$ converge to a unique limit in $X$.\\



   \item Let $(X,\tau_1)$ and $(Y,\tau_2)$ be topological spaces and $f:X\rightarrow Y$ be a function. Then $f$ is said to be continuous if for every open set $V\in \tau_1$, $f^{-1}(V)\in \tau_1$.\\

   \item Let $(X_1,\tau_1)$ and $(X_2,\tau_2)$ be two topological spaces.  Then the collection
   $\mathcal B:={\{U\times V: U\in \tau_1,V\in \tau_2 }\}$ forms a basis for a topology on $X_1\times X_2$, and is called as the product topology. \\

   \item If $(X_i,\tau_i)$ is  topological space for $i=1,2,\dots,n$. Then we can define the product topology by the similar way.\\

   \item Let ${\{X_{\alpha}}\}_{\alpha \in J}$ be a family of topological spaces. Let $X=\displaystyle \prod_{\alpha\in J}X_{\alpha}$ and $\pi_{\beta}:X\rightarrow X_{\beta}$ be the projection defined by $$\pi_{\beta}((x_{\alpha})_{\alpha \in J})=x_{\beta}.$$ Let $S_{\beta}={\{\pi_{\beta}^{-1}(U_{\beta})| U_{\beta} \; \text{is open in }\; X_{\beta}}\}$ and $\mathcal S=\bigcup_{\beta \in J}\mathcal S_{\beta}$. The topology generated by the subbasis $\mathcal S$ is called the product topology and the space $X$ with this topology is called the product space.\\
       \item Let $(X,\tau)$ be a topological space. A collection $\mathcal A = {\{U_i}\}_{i\in I}$ of open subsets of $X$ such that
$X = \cup_{i\in I}U_i$ is called an open cover of $X$. A subcollection $\mathcal B\subseteq \mathcal A$ is called a subcover if the
union of the sets in $\mathcal B$  also cover $X$. A subcover is finite if it contains finitely many open sets.\\

\item Let $(X,\tau)$ be a topological space. Then $X$ is compact if every open cover has a finite subcover.\\

  \item  (Tychonoff theorem) If each $X_{\alpha}$ is compact, then $X=\displaystyle \prod_{\alpha \in J}X_{\alpha}$ is compact with respect to the product topology.

\end{enumerate}
\section{Weak Topology}
Let $X$ be a non empty set and $(X_{\alpha},\tau_{\alpha})_{\alpha\in \Lambda}$ be a family of topological spaces.  Let $\mathcal F:={\{f_{\alpha}: X\rightarrow X_{\alpha}: \alpha \in \Lambda}\}$.

\begin{qn}
Define a topology $\tau$ on $X$ in such a way that each $f\in \mathcal F$ is continuous with respect to $\tau$.
\end{qn}

Suppose if we take $\tau$ to be the discrete topology, then each $f\in \mathcal F$ is continuous with respect to $\tau$. But, we know that the eventually constant sequences are the only convergent sequences and only finite sets are compact in this topology, so we avoid this. Hence we rephrase the above question as follows.

\begin{qn}
Find the smallest topology $\tau$ such that each $f\in \mathcal F$ is continuous with respect to $\tau$.
\end{qn}

Since we expect each $f_{\alpha}$ to be continuous with respect to $\tau$, if $U_{\alpha}\in \tau_{\alpha}$, then $f_{\alpha}^{-1}(U_{\alpha})\in \tau$. If $\tau$ is topology, then arbitrary unions of finite intersection of $f_{\alpha}^{-1}(U_{\alpha})$ should be in $\tau$.
Let $\mathcal S:={\{f_{\alpha}^{-1}(U_{\alpha}): U_{\alpha }\in \tau_{\alpha},\; \alpha \in \Lambda}\}$. Then the topology generated by $\mathcal S$ is the required topology. This topology is called the weak topology generated by $\mathcal F$.
\begin{defn}
Let $X$ be a non empty set and $(X_\alpha,\tau_\alpha)$ be a family of topological spaces indexed by $\Lambda$. The weak topology generated by the family of functions $\mathcal F={\{f_\alpha: X\rightarrow X_\alpha}\}$ is the topology generated by the subbasic open sets ${\{f^{-1}_\alpha(U_\alpha):U_\alpha \in \tau_\alpha,\; \alpha \in \Lambda}\}$.
\end{defn}
We denote the topology generated by $\mathcal F$ on $X$ by $\sigma(X,\mathcal F)$.

\begin{rmk}
 Let $\mathcal F_1$ and $\mathcal F_2$ be family of functions from $X$ into $Y_{\alpha}\; (\alpha \in \Lambda)$ such that $\mathcal F_1 \subseteq \mathcal F_2$. Then $\sigma(X,\mathcal F_1)$ is weaker than $\sigma(X,\mathcal F_2)$.

\end{rmk}

\subsection{Examples}
\begin{enumerate}
\item  Let $(X,d)$ be a metric space and $u\in X$. Define $f_u:X\rightarrow \mathbb R$ by
\begin{equation*}
f_u(x)=d(x,u),\; x\in X.
\end{equation*}

Let $\mathcal F={\{f_u:u\in X}\}$. We would like to find out $\sigma(X,\mathcal F)$.  Let $r>0$. Then
 \begin{align*}
 f_u^{-1}(-r,r))&={\{x\in X: f_u(x)<r}\}\\
                       &={\{x\in X: d(u,x)<r}\}\\
                       &=B(u,r),
 \end{align*}
  the open ball centered at $u$ with radius $r$. Hence the topology generated by the family of functions $\mathcal F$ is the metric topology.\\

\item  Let $X=C[a,b]:={\{f:[a,b]\rightarrow \mathbb R: f\; \text{ is continuous}}\}\; (a,b\in \mathbb R, \; a<b)$ and $x\in X$. Define $e_x:X\rightarrow \mathbb R$ by
\begin{equation*}
e_x(f)=f(x),\; f\in X.
\end{equation*}
Let $\mathcal F={\{e_x:x\in X}\}$. Then  $\sigma(X,\mathcal F)$, the weak topology generated by $\mathcal F$ is called the topology of pointwise convergence on $X$.\\

\item  Let $(X_{\alpha},\tau_{\alpha}) \;(\alpha \in \Lambda)$ be topological spaces and $X=\displaystyle \Pi_{\alpha \in \Lambda}X_\alpha$. That is,
\begin{equation*}
X={\{x:\Lambda \rightarrow \displaystyle \cup X_{\alpha}: x(\alpha)\in X_\alpha}\}.
\end{equation*}
Note that $X\neq \emptyset$, by the Axiom of Choice. Let $x\in X$. Then $x=(x_\alpha)_{\alpha \in \Lambda}$. Define $P_\alpha:X\rightarrow X_\alpha$ by
$P_\alpha(x)=x_\alpha$. Then each $P_\alpha$ is a projection. The weak topology generated by $\mathcal F={\{P_\alpha:\alpha \in \Lambda}\}$ has the subbasis of the form
\begin{equation*}
{\{P_\alpha^{-1}(U_\alpha):\alpha \in \Lambda,\, U_\alpha \in \tau_\alpha}\}.
\end{equation*}
But this is the product topology on $X$. Hence $\sigma(X,\mathcal F)$ coincide with the product topology on $X$.
\end{enumerate}

\subsection{Properties}
\begin{thm}
Let $(X,\tau)$ be a topological space and $\mathcal F={\{f_{\alpha}: X\rightarrow X_{\alpha}}\}$ be a family of continuous functions from $X$ into the topological space $(X,\tau_{\alpha})$. Then $\sigma(X,\mathcal F)$, the weak topology generated by $\mathcal F$ is weaker than $\tau$.
\end{thm}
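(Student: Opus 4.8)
The plan is to establish the set inclusion $\sigma(X,\mathcal F)\subseteq \tau$, since that is exactly what ``weaker than $\tau$'' means under the convention fixed earlier in this section ($\tau_1$ weaker than $\tau_2$ iff $\tau_1\subseteq\tau_2$). The cleanest route exploits the characterization, recorded above, of the subbasis-generated topology: $\sigma(X,\mathcal F)$ is the \emph{smallest} topology containing the subbasis $\mathcal S={\{f_\alpha^{-1}(U_\alpha):U_\alpha\in\tau_\alpha,\ \alpha\in\Lambda}\}$.

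First I would check that every subbasic set already lies in $\tau$. Fix $\alpha\in\Lambda$ and $U_\alpha\in\tau_\alpha$; because $f_\alpha$ is continuous as a map from $(X,\tau)$ into $(X_\alpha,\tau_\alpha)$, the preimage $f_\alpha^{-1}(U_\alpha)$ is open in $\tau$. As $\alpha$ and $U_\alpha$ were arbitrary, this yields $\mathcal S\subseteq\tau$.

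Next I would invoke the stated fact that the topology generated by a subbasis $\mathcal S$ is the smallest topology containing $\mathcal S$. Since $\tau$ is itself a topology containing $\mathcal S$ by the previous step, minimality forces $\sigma(X,\mathcal F)\subseteq\tau$, which is the desired conclusion.

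If one prefers to avoid the minimality characterization, the same result unwinds by hand: a generic $\sigma(X,\mathcal F)$-open set is a union of finite intersections of members of $\mathcal S$; each member lies in $\tau$ by the first step, each finite intersection lies in $\tau$ because $\tau$ is closed under finite intersections, and the resulting union lies in $\tau$ because $\tau$ is closed under arbitrary unions. Either way there is essentially no obstacle here; the only points requiring care are the direction of the inclusion dictated by the definition of ``weaker'' and the observation that continuity of the $f_\alpha$ is precisely the statement that preimages of the subbasic sets are $\tau$-open.
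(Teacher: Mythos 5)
Your proof is correct. The paper actually states this theorem without proof, so there is nothing to diverge from; your argument uses exactly the ingredients the paper sets up just beforehand (continuity of each $f_{\alpha}$ puts every subbasic set $f_{\alpha}^{-1}(U_{\alpha})$ in $\tau$, and the recorded fact that the topology generated by a subbasis is the smallest topology containing it forces $\sigma(X,\mathcal F)\subseteq\tau$), and your hands-on alternative via unions of finite intersections is an equally valid unwinding of the same point.
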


Let $X_{\alpha}=\mathbb F \; (\mathbb F=\mathbb R\; \text{or}\; \mathbb C)$ and $f_{\alpha}: X\rightarrow X_{\alpha}$ be a function. Let $\mathcal F:={\{f_{\alpha}: \alpha \in \Lambda}\}$. Let $\mathcal G:={\{F\subset \Lambda: F\; \text{ is finite}}\}$. Then the weak topology generated by $\mathcal F$ has the basis $$\mathcal B:=\Big\{\displaystyle \bigcap_{\alpha \in F}f_{\alpha}^{-1}((-\epsilon, \epsilon)): F\in \mathcal G,\; \epsilon>0 \Big\}.$$

Hence a set $U$ is open in the weak topology of $X$ if and only if given $x\in U$, there exists $\alpha_1,\alpha_2,\dots, \alpha_n\in \Lambda$ such that $x\in \displaystyle \cap_{i=1}^nf_{\alpha_i}^{-1}(-\epsilon_i,\epsilon_i)\subset U$. This is implies that  $|f_{\alpha_i}(x)|<\epsilon_i $ for $i=1,2,\dots, n$.

A subbasic open set containing a point $x_0\in X$ is of the form $f_{\alpha}^{-1}\Big( f_{\alpha}(x_0)-\epsilon,f_{\alpha}(x_0)-\epsilon,\Big)$ for each $\alpha \in \Lambda$ and for each $\epsilon >0$. In fact, this is nothing but ${\{x\in X: |f_{\alpha}(x)-f_{\alpha}(x_0)|<\epsilon}\}$, for all choices of $\alpha \in \Lambda$ and for all choices of $\epsilon>0$.

A basic open set containing $x_0\in X$ is nothing but the finite intersection of subbasic open sets. Hence it can be of the form:\\
\begin{equation*}
B(x_0; f_1,f_2,\dots, f_n: \epsilon):={\{x\in X: |f_i(x)-f_i(x_0)|<\epsilon}\},
\end{equation*}
for $f_i\in \mathcal F, \;  i=1,\dots, n,\; (n\geq 1)$  and  $\epsilon>0$.

The following properties can be proved easily.

\begin{prop}\label{weakopensetsprop}
\begin{enumerate}
\item $x_0\in B(x_0; f_1,f_2,\dots,f_n; \epsilon)$
\item Let  $g_i\in \mathcal F, \; i=1,2,\dots,m$. Let $B_1=B(x_0; f_1,f_2,\dots,f_n; \epsilon_1),\; B_2=B(x_0; g_1,g_2,\dots,g_m; \epsilon_2)$ and $\epsilon=\min{\{\epsilon_1,\epsilon_2}\}$. Then
\begin{equation*}
B(x_0; f_1,f_2,\dots,f_n,g_1,g_2,\dots,g_m;\epsilon)\subseteq B_1 \cap B_2.\\
\end{equation*}
\item if $x_1\in B(x_0, f_1,f_2,\dots, f_n; \epsilon)$, then there exists $\delta>0$ such that $B(x_1, f_1,f_2,\dots, f_n; \delta)\subset B(x_0, f_1,f_2,\dots, f_n; \epsilon)$\\
\item \label{Hausdorffproperty} Let $x_0,y_0\in X$ with $x_0\neq y_0$ and $\epsilon_1,\epsilon_2>0$. Let $x\in B(x_0: f_1,f_2,\dots,f_n; \epsilon_1)\cap B(y_0: g_1,g_2,\dots,g_m; \epsilon_2)$.  Then there exists $\delta>0$ such that
$B(x: f_1,f_2,\dots,f_n,g_1,g_2,\dots,g_m; \delta)\subseteq B_1\cap B _2$\\
\item Let $x_0,y_0\in X$ with $x_0\neq y_0$. Assume that there exists $f\in \mathcal F$ such that $f(x_0)\neq f(y_0)$. Then $B\big(x_0,f, \dfrac{\epsilon}{2}\big)\cap B\big(y_0, f,\dfrac{\epsilon}{2}\big)=\emptyset$.
\end{enumerate}
\end{prop}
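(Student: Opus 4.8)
The plan is to treat all five items as direct consequences of the defining condition
\[
B(x_0; f_1,\dots,f_n;\epsilon)=\{x\in X: |f_i(x)-f_i(x_0)|<\epsilon \text{ for } i=1,\dots,n\},
\]
together with the triangle inequality on the scalar field $\mathbb F$ in which each $f_i$ takes its values. Items (1) and (2) are immediate set-theoretic observations, items (3) and (4) are $\epsilon$--$\delta$ shrinking arguments that express the ``openness'' of these basic sets, and item (5) is a separation statement. I would prove them in the stated order, so that (4) may quote (2) and (3) directly.

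For (1), substituting $x=x_0$ gives $|f_i(x_0)-f_i(x_0)|=0<\epsilon$ for every $i$, so $x_0$ lies in the set. For (2), I would take any $x$ in $B(x_0; f_1,\dots,f_n,g_1,\dots,g_m;\epsilon)$ with $\epsilon=\min\{\epsilon_1,\epsilon_2\}$; then $|f_i(x)-f_i(x_0)|<\epsilon\le\epsilon_1$ for all $i$, so $x\in B_1$, and $|g_j(x)-g_j(x_0)|<\epsilon\le\epsilon_2$ for all $j$, so $x\in B_2$, whence $x\in B_1\cap B_2$.

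For (3), since $x_1\in B(x_0;f_1,\dots,f_n;\epsilon)$ the quantity $\delta:=\epsilon-\max_{1\le i\le n}|f_i(x_1)-f_i(x_0)|$ is strictly positive, and for any $x\in B(x_1;f_1,\dots,f_n;\delta)$ the triangle inequality yields
\[
|f_i(x)-f_i(x_0)|\le |f_i(x)-f_i(x_1)|+|f_i(x_1)-f_i(x_0)| < \delta + \max_{1\le i\le n}|f_i(x_1)-f_i(x_0)| = \epsilon,
\]
so $x\in B(x_0;f_1,\dots,f_n;\epsilon)$. Item (4) then combines (2) and (3): applying (3) to each ball separately produces radii $\delta_1,\delta_2>0$ with $B(x;f_1,\dots,f_n;\delta_1)\subseteq B_1$ and $B(x;g_1,\dots,g_m;\delta_2)\subseteq B_2$; setting $\delta=\min\{\delta_1,\delta_2\}$ and invoking (2) at the centre $x$ gives
\[
B(x;f_1,\dots,f_n,g_1,\dots,g_m;\delta)\subseteq B(x;f_1,\dots,f_n;\delta_1)\cap B(x;g_1,\dots,g_m;\delta_2)\subseteq B_1\cap B_2.
\]
For (5) I would read the implicit radius as $\epsilon=|f(x_0)-f(y_0)|$, which is positive by hypothesis; if some $x$ lay in both $B(x_0,f,\epsilon/2)$ and $B(y_0,f,\epsilon/2)$, the triangle inequality would force $|f(x_0)-f(y_0)|\le|f(x_0)-f(x)|+|f(x)-f(y_0)|<\epsilon/2+\epsilon/2=\epsilon$, a contradiction, so the intersection is empty.

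None of these steps presents a genuine difficulty, as the whole content is the scalar triangle inequality applied coordinatewise. The only point demanding care is the bookkeeping in (4), where one must guarantee that a single radius $\delta=\min\{\delta_1,\delta_2\}$ controls both families of functions at once; this is precisely what item (2) supplies, so the argument is self-contained once (2) and (3) are established.
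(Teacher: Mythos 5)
Your proof is correct in all five parts; note that the paper itself offers no proof here, stating only that ``the following properties can be proved easily,'' so your write-up simply supplies the routine triangle-inequality argument the author intended. You also handle the two points where the paper's statement is imprecise exactly as one should: in (4) you correctly read $B_1$ and $B_2$ as the two balls centred at $x_0$ and $y_0$ and reduce to (2) and (3) with a single radius $\delta=\min\{\delta_1,\delta_2\}$, and in (5) you correctly identify the unspecified $\epsilon$ as $|f(x_0)-f(y_0)|$ (any $0<\epsilon\leq|f(x_0)-f(y_0)|$ works), which is what makes the disjointness argument go through.
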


From now onwards, we can consider weak topologies on normed linear spaces.

Let $X$ be  a normed linear space over a field $\mathcal F$ (here $\mathbb F=\mathbb C$ or $\mathbb R $) and $\mathcal F=X^*$, the dual space of $X$. Recall that $$X^*:={\{f:X\rightarrow \mathbb C: f\; \text{ is linear and continuous}}\}.$$
\begin{defn}
Let $X$ be a normed linear space and $X^*$ denote the dual space of $X$. Then the weak topology  on $X$ is  $\sigma(X,X^*)$, the weak topology generated by members of $X^*$.
\end{defn}

Note that a subbasic open set about $x_0\in X$ is of the form:
\begin{equation*}
V(x_0,f,\epsilon):={\{x\in X: |f(x)-f(x_0)|<\epsilon}\}\; \text{for all}\; f\in X^*,\; \text{and}\; \epsilon>0
\end{equation*}
and a basic open set about $x_0\in X$ in the topology $\sigma(X,X^*)$ is of the form:
\begin{equation*}
B(x_0;f_1,f_2,\dots,f_n; \epsilon)=\big\{x\in X: |f_i(x)-f_i(x_0)|<\epsilon\big\},
\end{equation*}
for all choices of $f_i\in X^*,\; i=1,2,\dots,n\; (n\geq 1)$ and  $\epsilon>0$.

Note that the topology $\sigma(X,X^*)$ is weaker than the norm topology of $X$. That is, every weakly open set is open with respect to the norm topology.

\begin{prop}
  The topology $\sigma(X,X^*)$ is Hausdorff.
\end{prop}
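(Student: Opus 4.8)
The plan is to exhibit, for any two distinct points $x_0, y_0 \in X$, a pair of disjoint $\sigma(X,X^*)$-open sets separating them, and the whole argument will hinge on producing a single continuous linear functional that distinguishes $x_0$ from $y_0$. First I would fix $x_0 \neq y_0$, so that $x_0 - y_0 \neq 0$, and appeal to the Hahn--Banach theorem: since $x_0 - y_0$ is a nonzero vector, there exists $f \in X^*$ with $f(x_0 - y_0) = \|x_0 - y_0\| \neq 0$, and by linearity $f(x_0) \neq f(y_0)$. This existence of a separating functional is the only genuinely analytic ingredient in the proof, and it is exactly where the structure of a normed space (as opposed to an arbitrary set equipped with a family of functions) is used.

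Once such an $f$ is in hand, the rest is the topological bookkeeping already prepared in Proposition \ref{weakopensetsprop}. I would set $\epsilon := |f(x_0) - f(y_0)| > 0$ and consider the basic open sets $B(x_0, f, \epsilon/2)$ and $B(y_0, f, \epsilon/2)$, each of which is $\sigma(X,X^*)$-open and contains $x_0$, respectively $y_0$, by part (1) of that proposition. Part (5) of the proposition then gives directly that $B(x_0, f, \epsilon/2) \cap B(y_0, f, \epsilon/2) = \emptyset$; alternatively, one checks this disjointness by hand via the triangle inequality, since any common point $x$ would satisfy $|f(x_0) - f(y_0)| \leq |f(x) - f(x_0)| + |f(x) - f(y_0)| < \epsilon/2 + \epsilon/2 = \epsilon$, contradicting the choice of $\epsilon$.

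Having produced disjoint weak-open neighbourhoods of two arbitrary distinct points, I would conclude that $\sigma(X,X^*)$ satisfies the Hausdorff separation axiom. The main (indeed only) obstacle is the first step: one must know that $X^*$ separates the points of $X$, which is precisely the content of a standard corollary of Hahn--Banach. If one instead worked with the weak topology $\sigma(X,\mathcal F)$ generated by an arbitrary family $\mathcal F$, the identical argument would show it to be Hausdorff exactly when $\mathcal F$ separates points, so the normed-space hypothesis enters only through this separation property.
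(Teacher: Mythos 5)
Your proof is correct and takes essentially the same route as the paper: use Hahn--Banach to produce an $f\in X^*$ separating the two points, then invoke Property (5) of Proposition \ref{weakopensetsprop} to obtain the disjoint basic neighbourhoods $B\big(x_0,f,\epsilon/2\big)$ and $B\big(y_0,f,\epsilon/2\big)$. If anything, your write-up is the more careful of the two, since you apply Hahn--Banach to the nonzero vector $x_0-y_0$ explicitly, a reduction the paper's one-line proof leaves implicit.
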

\begin{proof}
  Since $X\neq {\{0}\}$, choose $x\in X$. Then by the Hahn-Banach theorem there exists a $0\neq f\in X^*$ such that $f(x)=\|x\|$. Now, the conclusion follows by Property (\ref{Hausdorffproperty}) of Proposition \ref{weakopensetsprop}.
\end{proof}

Let $X$ be a normed linear space and a sequence $(x_n)\subset X$  converges to $x\in X$ weakly, then we denote this by $x_n\xrightarrow{w}x$.

\begin{thm}\label{weakconvergence}
 Let $X$ be a normed linear space and  $(x_n)\subset X$. Then $x_n\xrightarrow{w}x\in X$ if and only $f(x_n)$ converges to $f(x)$ for all $f\in X^*$.
\end{thm}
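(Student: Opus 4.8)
The plan is to exploit directly the description of basic open sets in $\sigma(X,X^*)$ that the excerpt has already set up, namely sets of the form $B(x;f_1,\dots,f_n;\epsilon)=\{y\in X:\m{f_i(y)-f_i(x)}<\epsilon\}$, and to translate the topological notion of convergence into the scalar convergence of each $f(x_n)$. Both implications reduce to unwinding definitions; the only structural fact that does real work is that every basic neighbourhood of $x$ is cut out by \emph{finitely many} functionals, which is what lets me take a single tail index to land inside it.

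For the forward implication, I would assume $x_n\xrightarrow{w}x$, fix an arbitrary $f\in X^*$ and $\epsilon>0$, and observe that $V(x,f,\epsilon)=\{y\in X:\m{f(y)-f(x)}<\epsilon\}$ is a subbasic (hence open) weak neighbourhood of $x$. By the definition of convergence in the topology $\sigma(X,X^*)$ there is an $n_0$ with $x_n\in V(x,f,\epsilon)$ for all $n\ge n_0$, that is, $\m{f(x_n)-f(x)}<\epsilon$ for $n\ge n_0$. Since $f$ and $\epsilon$ were arbitrary, $f(x_n)\to f(x)$ for every $f\in X^*$.

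For the converse, I would assume $f(x_n)\to f(x)$ for all $f\in X^*$ and let $U$ be any weakly open set containing $x$. Since the sets $B(x;f_1,\dots,f_n;\epsilon)$ form a basis at $x$, I can choose one such basic set with $x\in B(x;f_1,\dots,f_n;\epsilon)\subseteq U$. For each $i\in\{1,\dots,n\}$ the scalar hypothesis applied to $f_i$ supplies an index $N_i$ with $\m{f_i(x_m)-f_i(x)}<\epsilon$ for $m\ge N_i$; taking $N=\max\{N_1,\dots,N_n\}$ forces $x_m\in B(x;f_1,\dots,f_n;\epsilon)\subseteq U$ for all $m\ge N$. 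Hence every weak neighbourhood of $x$ eventually contains the sequence, which is precisely $x_n\xrightarrow{w}x$.

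The step I expect to require the most care---though it is more a matter of bookkeeping than of genuine difficulty---is the converse, where I must pass from a general weakly open $U$ to a controlling basic set and then amalgamate the finitely many scalar tails into one uniform threshold $N$. This is exactly where finiteness of the index set defining a basic neighbourhood is essential: an infinite family of functionals would not in general admit a common tail index, and the equivalence could fail.
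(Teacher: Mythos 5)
Your proposal is correct and follows essentially the same route as the paper: the forward direction uses the subbasic neighbourhood $V(x,f,\epsilon)$ together with the definition of convergence in $\sigma(X,X^*)$, and the converse passes from an arbitrary weakly open $U$ to a basic set $B(x;f_1,\dots,f_n;\epsilon)\subseteq U$ and takes the maximum of the finitely many tail indices, exactly as in the text. Your closing remark correctly isolates the finiteness of the defining family of functionals as the step doing the real work.
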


\begin{proof}

   Suppose $x_n\xrightarrow{w}x$. Let $f\in X^*$ and $\epsilon>0$ be given. Then $U={\{y\in X: |f(y)-f(x)|<\epsilon}\}$ is a weakly open set containing $x$. Choose $n_0\in \mathbb N$
such that  $x_n \in U $ for all $n\geq n_0$. From this we can  conclude that $f(x_n)\rightarrow f(x)$.

To prove the other way, assume that $f(x_n)\rightarrow f(x)$ for all $f\in X^*$. Let $U$ be a weakly open set containing $x$. Choose a $\delta>0$  and $f_i\in X^*,\; i=1,2,\dots,m$ such that
 \begin{equation*}
 \big\{y\in X: |f_i(x)-f_i(y)|<\epsilon:i=1,2,\dots,m \big\} \subset U.
 \end{equation*}
 As $f_j(x_n)\rightarrow f_j(x)$ for each $j=1,2,\dots,m$, there exists $n_j\in \mathbb N$ such that $|f_j(x_n)-f_j(x)|<\epsilon$ for all $n\geq n_j$. Let $n_0=\max{\{n_j:j=1,2,\dots,m}\}$. Then for $n\geq n_0$, we have

\begin{equation*}
|f_j(x_n)-f_j(x)|<\epsilon \; \text{ for all} \; n\geq n_0.
\end{equation*}
That is  $x_n\in U$ for all $n\geq n_0$. Hence $x_n\xrightarrow {w}x$.
\end{proof}

\begin{rmk}
  By the same arguments as above, we can prove  Theorem \ref{weakconvergence} by replacing nets in place of sequences.
\end{rmk}
\begin{ex}
Show that if $x_n\xrightarrow{w}x$ and $y_n\xrightarrow{w}y$ and $\alpha \in \mathbb C$, then
\begin{itemize}
\item[(i)] $x_n+y_n\xrightarrow{w}x+y$
\item[(ii)]$\alpha x_n\xrightarrow{w}\alpha x$.
\end{itemize}
\end{ex}
\begin{ex}
Let $H$ be a Hilbert space and $(x_n)\subset H$. Show that $x_n\xrightarrow{w}x$ if and only if $\langle x_n,y\rangle \rightarrow \langle x,y\rangle$ for every $y\in H$.
\end{ex}
\begin{rmk}
If $H$ is a separable Hilbert space with a countable orthonormal basis ${\{e_n:n\in \mathbb N}\}$, then $e_n\xrightarrow{w}0$ as $n\rightarrow \infty$.
\end{rmk}

Here we recall some of the basic theorems from Functional Analysis, which we need later.

\begin{prop}
Let $X$ be a non zero normed linear space. If $x\in X$, then there exists $0\neq f\in X^*$ such that $f(x)=\|x\|$ and $\|f\|=1$. In other words,  we can say that if  $X\neq {\{0}\}$, then $X^*\neq {\{0}\}$.
\end{prop}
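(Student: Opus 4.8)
The plan is to invoke the Hahn--Banach theorem in its norm-preserving extension form, after first manufacturing a functional with the desired properties on a one-dimensional subspace. The only genuine content is the extension step; everything else is routine bookkeeping. I would begin by disposing of the degenerate case: if $x = 0$, then $\|x\| = 0$ and the requirement $f(x) = \|x\|$ holds automatically for any linear $f$, so it suffices to produce some $f \in X^*$ with $\|f\| = 1$. Since $X \neq \{0\}$, I would pick any $z \neq 0$ and run the construction below with $z$ in place of $x$. Hence I may assume $x \neq 0$.

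Next I would define the target functional on the line through $x$. Let $M = \{\lambda x : \lambda \in \mathbb F\}$ and define $g : M \to \mathbb F$ by $g(\lambda x) = \lambda \|x\|$. This $g$ is well defined and linear (because $x \neq 0$, so $\lambda$ is determined by $\lambda x$), it satisfies $g(x) = \|x\|$, and since $|g(\lambda x)| = |\lambda|\,\|x\| = \|\lambda x\|$ for every $\lambda$, its operator norm on $M$ is exactly $1$.

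The key step is to extend $g$ from $M$ to all of $X$ without increasing its norm. Here I would apply the Hahn--Banach theorem with the dominating sublinear functional taken to be $p(y) = \|y\|$; this dominates $g$ on $M$ precisely because $\|g\| = 1$, so that $|g(y)| \le \|y\| = p(y)$ there. The theorem then yields $f \in X^*$ with $f|_M = g$ and $\|f\| \le 1$, while $\|f\| \ge \|g\| = 1$ since $f$ extends $g$; thus $\|f\| = 1$. In the complex scalar case I would use the complex form of Hahn--Banach (or equivalently extend $\mathrm{Re}\, g$ and recover the complex functional in the standard way), which is the one spot where a little care is needed, but it is entirely routine and I do not expect it to be a real obstacle.

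Finally, the functional $f$ produced this way satisfies $f(x) = g(x) = \|x\|$ and $\|f\| = 1$, so in particular $f \neq 0$; this is exactly the assertion. The closing remark that $X^* \neq \{0\}$ whenever $X \neq \{0\}$ is then immediate, since the $f$ just constructed (starting from any nonzero vector) is a nonzero element of $X^*$.
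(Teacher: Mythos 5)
Your proof is correct, and it is the standard argument: the paper itself states this proposition without proof, recalling it as a known consequence of the Hahn--Banach theorem, and your norm-preserving extension of $g(\lambda x)=\lambda\|x\|$ from the span of $x$, with $p(y)=\|y\|$ as the dominating sublinear functional, is exactly the argument being tacitly invoked. Your handling of the degenerate case $x=0$ (constructing $f$ from some $z\neq 0$) and your remark on the complex scalar case are both appropriate and complete.
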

\begin{thm}(Hahn-Banach Separation theorem)
Let $A$ and $B$ be disjoint non empty convex sets in a real Banach space $X$. Then $A$ and $B$ can be separated by a hyperplane, i.e. there exists a non zero functional $f\in X^*$ and a real number $c$ such that
\begin{equation*}
f(x)\leq c< f(y)\;\; \text{ for all}\;  x\in A \; \text{ and}\;  y\in B.
\end{equation*}
\end{thm}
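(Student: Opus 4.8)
The plan is to reduce the separation of two convex sets to the separation of a single convex set from a point, and then to manufacture the separating functional from the analytic form of the Hahn--Banach theorem (extension of a linear functional dominated by a sublinear functional) applied to a Minkowski gauge. First I would form the difference set $C = \{a - b : a \in A,\, b \in B\}$. As $A + (-B)$ it is convex, and because $A \cap B = \emptyset$ we have $0 \notin C$. A functional $f$ with $f(c) < f(0) = 0$ for every $c \in C$ rearranges at once into $f(a) < f(b)$ for all $a \in A$ and $b \in B$, so taking $c := \sup_{a \in A} f(a)$ delivers $f(a) \le c \le f(b)$, which is the desired hyperplane.

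The heart of the matter is therefore separating the point $0 \notin C$ from $C$. Here I would first translate so that an interior point of $C$ becomes the origin, obtaining a convex set $D$ with $0$ in its interior; the point $0$ is carried to some $x_0 \notin D$. I would then introduce the Minkowski gauge $p(x) = \inf\{t > 0 : x/t \in D\}$ and record the standard facts: $p$ is positively homogeneous and subadditive, it satisfies $p \le M\|\cdot\|$ for some $M$ (hence is continuous) because $D$ contains a norm ball about $0$, and $p(x) < 1$ exactly on the interior of $D$, so $p(x_0) \ge 1$. Defining $g(\lambda x_0) = \lambda\, p(x_0)$ on the line $\mathbb{R} x_0$ and checking $g \le p$ there, the analytic Hahn--Banach theorem extends $g$ to a linear $f$ on $X$ with $f \le p$ everywhere. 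Domination by $p \le M\|\cdot\|$ on both $x$ and $-x$ gives $|f| \le M\|\cdot\|$, so $f \in X^*$; and $f(x_0) = p(x_0) \ge 1 > f(d)$ for $d$ in the interior of $D$. Translating back, the additive constants cancel and yield $f(c) < 0$ for all $c \in C$, as required.

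The step I expect to be the genuine obstacle --- and where the hypotheses as written must be read charitably --- is the strict inequality $f(x) \le c < f(y)$ claimed in the statement. Strictness on one side cannot hold for arbitrary disjoint convex sets; it forces an interior-point (openness) hypothesis on $C$, equivalently on one of $A$, $B$, which is exactly what the gauge construction already silently uses. Assuming as usual that, say, $B$ is open, I would note that a nonzero continuous linear functional is an open map onto $\mathbb{R}$, so $f(B)$ is an open interval; then $c = \sup_{a} f(a) \le \inf_{b} f(b)$ combined with the absence of a minimum of $f$ on $B$ gives $f(b) > c$ for every $b \in B$. The remaining verifications --- subadditivity and continuity of the gauge, and the bookkeeping of the two translations --- are routine and I would leave them as brief checks rather than carry them out in full.
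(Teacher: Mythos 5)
The paper does not actually prove this theorem: it is recalled, without proof, among background results from functional analysis, so there is no argument in the paper to compare yours against. On its own merits, your proposal is the standard and correct proof: pass to the difference set $C=A-B$, separate the point $0\notin C$ from $C$ via the Minkowski gauge of a translate of $C$ together with the analytic Hahn--Banach theorem, and recover continuity of the extension from $f\le p\le M\|\cdot\|$ applied at $\pm x$. The steps you defer (positive homogeneity and subadditivity of the gauge, $\{p<1\}=\operatorname{int}D$ for convex $D$ with $0\in \operatorname{int}D$, the check $g\le p$ on $\mathbb{R}x_0$ using $p\ge 0$ for negative multiples, and the translation bookkeeping) are indeed routine and check out. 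Your diagnosis of the statement itself is also accurate and worth making explicit: as printed the theorem is false --- for arbitrary disjoint convex sets in an infinite-dimensional Banach space even non-strict separation by a continuous functional can fail (two disjoint dense convex sets admit none), and the strict inequality $c<f(y)$ forces an interior-point hypothesis on $B$ (equivalently on $A-B$), which is exactly what your gauge construction needs so that $C=A-B=\bigcup_{a\in A}(a-B)$ is open. With ``$B$ open'' added, your closing step --- a nonzero continuous linear functional is an open map, so $f(B)$ is an open interval, $\inf_{b\in B}f(b)$ is not attained, and hence $f(b)>c$ --- correctly yields the strict inequality. Two small points to make explicit in a write-up: $c=\sup_{a\in A}f(a)$ is finite because it is bounded above by $f(b)$ for any fixed $b\in B$, and $f\neq 0$ because $f(x_0)=p(x_0)\ge 1$.
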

\begin{thm}(Banach-Steinhauss theorem)\label{bsthm}
Let $X$ be a Banach space and $Y$ be a normed linear space over a field $\mathbb K$. Suppose $(T_n)\subset \mathcal B(X,Y)$, the space of all bounded linear maps from $X$ to $Y$. Assume that  $T_nx\rightarrow Tx$ for all $x\in X$. Then $T\in \mathcal B(X,Y)$ and $\|T\|\leq \displaystyle \liminf_{n}\|T_n\|$.
\end{thm}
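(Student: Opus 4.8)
The linearity of $T$ comes for free: for $x,y\in X$ and scalars $\alpha,\beta$, the pointwise convergence together with the linearity of each $T_n$ gives $T(\alpha x+\beta y)=\lim_n T_n(\alpha x+\beta y)=\lim_n(\alpha T_n x+\beta T_n y)=\alpha Tx+\beta Ty$. Thus the whole content of the statement is that $T$ is \emph{bounded} and that its norm is controlled by $\liminf_n\|T_n\|$. My plan is to first extract a uniform bound on the operators $T_n$, and then to obtain the sharp inequality by a direct limiting argument.

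For the uniform bound I would argue as follows. Fix $x\in X$. Since $T_n x\to Tx$ in $Y$, the sequence $(T_n x)_n$ is convergent and hence bounded, so $\sup_n\|T_n x\|<\infty$. As this holds for every $x$, the family $(T_n)$ is pointwise bounded, which is precisely the hypothesis of the Uniform Boundedness Principle. Because $X$ is complete, that principle applies and produces a finite constant $M:=\sup_n\|T_n\|<\infty$. In particular $\liminf_n\|T_n\|\le M<\infty$.

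With this in hand the estimate is elementary. Fix $x\in X$ with $\|x\|\le 1$. For every $n$ one has $\|T_n x\|\le\|T_n\|\,\|x\|\le\|T_n\|$, so $\liminf_n\|T_n x\|\le\liminf_n\|T_n\|$. On the other hand $T_n x\to Tx$ in norm, and the reverse triangle inequality $\big|\,\|T_n x\|-\|Tx\|\,\big|\le\|T_n x-Tx\|$ forces $\|T_n x\|\to\|Tx\|$; hence $\liminf_n\|T_n x\|=\|Tx\|$. Combining the two observations gives $\|Tx\|\le\liminf_n\|T_n\|$ for every $x$ in the closed unit ball, and taking the supremum over such $x$ yields $\|T\|\le\liminf_n\|T_n\|$. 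Since the right-hand side is finite, this simultaneously shows $T\in\mathcal B(X,Y)$ and establishes the claimed bound.

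The one genuinely substantial step is the passage from pointwise boundedness to uniform boundedness, and it is the sole point at which completeness of $X$ enters, via the Baire category theorem underlying the Uniform Boundedness Principle. Everything else is formal: linearity is inherited termwise, and the final inequality is merely continuity of the norm combined with the definition of the operator norm. If one wanted a fully self-contained argument, the work would be to prove the Uniform Boundedness Principle itself (a Baire-category ``condensation of singularities'' argument), after which the reduction above goes through unchanged.
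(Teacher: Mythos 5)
Your proof is correct and complete. Note that the paper does not actually prove this theorem --- it is listed among the ``basic theorems from Functional Analysis'' recalled as background, and is used without proof to establish the subsequent proposition on weak convergence. Your argument is the standard one: pointwise convergence gives pointwise boundedness, the Uniform Boundedness Principle (the sole place where completeness of $X$ enters) yields $\sup_n\|T_n\|<\infty$, and then for $\|x\|\le 1$ the continuity of the norm gives $\|Tx\|=\lim_n\|T_n x\|=\liminf_n\|T_n x\|\le\liminf_n\|T_n\|$, from which both the boundedness of $T$ and the sharp estimate follow at once; linearity is inherited termwise, exactly as you say.
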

\begin{prop}
Let $X$ be a Banach space. Suppose $(x_n)\subset X$ be such that $x_n\xrightarrow{w} x$. Then
\begin{itemize}
\item[(a)] $(x_n)$ is bounded and
\item[(b)] $\|x\|\leq \displaystyle \liminf_{n}\|x_n\|$.
\end{itemize}
\end{prop}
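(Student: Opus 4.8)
The plan is to transport the problem into the bidual via the canonical embedding $J \colon X \to X^{**}$ defined by $(Jx)(f) = f(x)$ for $f \in X^*$, so that both assertions become statements about the family of bounded linear functionals $\{Jx_n\}$ on the Banach space $X^*$, to which the Banach--Steinhaus machinery applies. The two facts I rely on about $J$ are that each $Jx_n$ is a bounded linear functional on $X^*$ and that $J$ is an isometry, i.e. $\n{Jx_n} = \n{x_n}$ (the standard consequence of the Hahn--Banach theorem recorded earlier). The crucial link to the hypothesis is Theorem \ref{weakconvergence}: since $x_n \xrightarrow{w} x$, we have $f(x_n) \to f(x)$ for every $f \in X^*$, which says exactly that $(Jx_n)(f) \to (Jx)(f)$ for every $f$, i.e. $Jx_n \to Jx$ pointwise on $X^*$.

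For part (a), I would first observe that for each fixed $f \in X^*$ the scalar sequence $\bigl((Jx_n)(f)\bigr)_n = (f(x_n))_n$ is convergent, hence bounded, so $\sup_n \m{(Jx_n)(f)} < \infty$. Because $X^*$ is complete (a dual space is always a Banach space), the uniform boundedness principle underlying Theorem \ref{bsthm} applies to $\{Jx_n\} \subset \mathcal B(X^*, \mathbb F)$ and yields $\sup_n \n{Jx_n} < \infty$. Using the isometry $\n{Jx_n} = \n{x_n}$ gives $\sup_n \n{x_n} < \infty$, which is the desired boundedness.

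For part (b), I would apply the $\liminf$ estimate of Theorem \ref{bsthm} with $X^*$ in place of the Banach space, $\mathbb F$ as target, $T_n = Jx_n$ and limit operator $T = Jx$; the pointwise convergence $Jx_n \to Jx$ established above is precisely the required hypothesis, and the conclusion reads $\n{Jx} \le \liminf_n \n{Jx_n}$. Translating back through the isometry $J$ gives $\n{x} \le \liminf_n \n{x_n}$.

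The only genuine subtlety is ensuring the Banach--Steinhaus hypotheses are met in the correct space: the completeness that is actually used is that of the domain $X^*$ (which holds for any dual, so the assumption that $X$ itself is Banach is not even essential here), and one must check that the limit functional is genuinely $Jx$ rather than merely some bounded functional on $X^*$. This last point is guaranteed by Theorem \ref{weakconvergence}, which pins down the pointwise limit as $f \mapsto f(x)$. Once these identifications are in place both parts are immediate; I expect no hard computation, only the bookkeeping of passing between $X$ and its image in $X^{**}$.
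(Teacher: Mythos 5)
Your proposal is correct and is essentially the paper's own argument: the evaluation maps $e_{x_n}$ in the paper are exactly your $Jx_n$, and both proofs combine Theorem \ref{weakconvergence}, the Banach--Steinhaus machinery of Theorem \ref{bsthm} applied on the Banach space $X^*$, and the isometry $\|e_y\|=\|y\|$ to conclude. Your explicit remark that only completeness of $X^*$ (not of $X$) is needed is a small refinement the paper leaves implicit, but it does not change the route.
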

\begin{proof}
Let $e_{x_n}: X^*\rightarrow \mathbb K$ be the evaluation map. That is $e_{x_n}(f)=f(x_n)$ for all $f\in X^*$. Since $x_n\xrightarrow{w} x$, it follows that $e_{x_n}(f)\rightarrow e_{x}(f)$ for all $f\in X^*$. Hence by the Banach-Steinhauss theorem, we can conclude that $e_{x}$ is bounded and $\|e_x\|\leq \displaystyle \liminf_{n}\|e_{x_n}\|$. Now, the result follows as $\|e_y\|=\|y\|$ for any $y\in X$.
\end{proof}

\begin{ex}
Let $(x_n)$ be a sequence in an inner product $X$ such that $x_n\xrightarrow{w}x\in X $ and $\|x_n\|\rightarrow \|x\|$ as $n\rightarrow \infty$. Show that $x_n\rightarrow x$.
\end{ex}
\begin{defn}
Let $(X,d)$ be a metric space and $f:X\rightarrow \mathbb R$ be a function. Then $f$ is said to be \textit{lower semi continuous} (\textit{weakly lower semicontinuous}) if $x_n\rightarrow x$ ($x_n\xrightarrow{w}x$) implies $f(x)\leq \liminf_{n} f(x_n)$.
\end{defn}
We can conclude that the function $\|\cdot\|$ is a weakly lower semicontinuous  on a normed linear space with respect to the weak topology.

\begin{defn}
A topological vector space is a vector space $V$ over a field $\mathbb K$ with a topology $\tau$ on it such that the vector space operations $+$ (vector addition) and $\cdot$  (scalar multiplication) are continuous with respect to the topology $\tau$.
\end{defn}
\begin{eg}
Let $X$ be a normed linear space over a field $\mathbb K$. Then $(X,\sigma(X,X^*))$ is a topological vector space.
\end{eg}

Now we discuss about bounded sets in the weak topology.

\begin{defn}(weakly bounded set)
Let $X$ be a normed linear space and $A\subset X$. Then $A$ is said to be weakly bounded if $f(A)$ is bounded for all $f\in X^*$. That is,  for each $f\in X^*$, there exists  $M_f>0$ such that $|f(a)|\leq M_{f}$ for all $a\in A$.
\end{defn}

\begin{rmk}
Every bounded set in a normed linear space is weakly bounded. To see this, let $A$ be a bounded subset of $X$. Then there exists a positive number $K$ such that $\|a\|\leq K$ for all $a\in A$. For $f\in X^*$, we have that
\begin{equation*}
|f(a)|\leq \|f\|\|a\|\leq K\|f\|.
\end{equation*}
Taking  $M_f=K\|f\|$, we can conclude that $A$ is weakly bounded.
\end{rmk}
What about the converse? Is every weakly  bounded set  bounded?. Surprisingly, this is true and  is an important application of the Uniform Boundedness Principle.


\begin{prop}\label{wclosedsets}
Let $C$ be a convex set in a normed linear space $X$. Then $C$ is closed if and only if $C$ is weakly closed.
\end{prop}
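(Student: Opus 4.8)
\section*{Proof proposal}

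The statement is an equivalence, but the two directions are very unequal in difficulty, and I would handle them separately. One direction is essentially free: since the excerpt records that $\sigma(X,X^*)$ is weaker than the norm topology, every weakly closed set is automatically norm closed. This uses neither convexity nor any separation result, so the genuine content of the proposition is the reverse implication, namely that a \emph{convex} norm-closed set is weakly closed.

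For that direction I would assume $C$ convex and norm closed and show that the complement $X\setminus C$ is weakly open, which is equivalent to $C$ being weakly closed. The plan is to fix an arbitrary $x_0\notin C$ and produce a \emph{weakly} open neighbourhood of $x_0$ that misses $C$. Since $C$ is norm closed and $x_0\notin C$, there is some $\epsilon>0$ with $B(x_0,\epsilon)\cap C=\emptyset$. Now $B(x_0,\epsilon)$ and $C$ are disjoint nonempty convex sets, so the Hahn--Banach Separation theorem furnishes a nonzero $f\in X^*$ and a real number $c$ separating them, say $\operatorname{Re}f(a)\le c<\operatorname{Re}f(y)$ for all $a\in B(x_0,\epsilon)$ and $y\in C$.

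The key move is to convert this separating hyperplane into a weak neighbourhood. The half-space $U=\{x\in X:\operatorname{Re}f(x)<c\}$ is the preimage of the open subset $\{z\in\mathbb C:\operatorname{Re}z<c\}$ under the weakly continuous functional $f$, hence $U$ is weakly open. It is disjoint from $C$ by the separation inequality, and it contains $x_0$: here it is important that I separated the \emph{open ball} rather than the single point $x_0$. Indeed, if $\operatorname{Re}f(x_0)$ equalled $c$, then choosing a direction $v$ with $\operatorname{Re}f(v)>0$ and moving slightly inside $B(x_0,\epsilon)$ would violate $\operatorname{Re}f(a)\le c$; so necessarily $\operatorname{Re}f(x_0)<c$ and $x_0\in U$. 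Thus every point of $X\setminus C$ has a weak neighbourhood lying in $X\setminus C$, so $C$ is weakly closed.

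The main obstacle is reconciling the proof with the exact form of the separation theorem stated earlier, which is phrased for a \emph{real Banach space} and a \emph{real} functional, whereas $X$ may be complex and need not be complete. I would dispose of this by separating in the underlying real normed space and then recovering a genuine complex functional $f\in X^*$ from its real part via the standard identity $f(x)=\operatorname{Re}f(x)-i\operatorname{Re}f(ix)$; if completeness is required to invoke the theorem as stated, one passes to the completion of $X$ and restricts the resulting functional back to $X$. The other subtlety, the strictness of the separation needed to make $U$ a true neighbourhood of $x_0$, is precisely what the open-ball trick in the previous paragraph resolves, so no further argument is needed there.
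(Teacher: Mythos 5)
Your proof is correct and follows essentially the same route as the paper's: the easy direction uses that $\sigma(X,X^*)$ is weaker than the norm topology, and for the converse both you and the paper fix $x_0\notin C$, separate an open ball about $x_0$ from $C$ by the Hahn--Banach separation theorem, and take the resulting half-space (weakly open, being $f^{-1}$ of an open half-plane) as a weak neighbourhood of $x_0$ contained in $C^{c}$. Your extra care about strictness at $x_0$, complex scalars, and the completeness hypothesis in the stated separation theorem tightens details the paper passes over silently, but does not change the argument.
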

\begin{proof}
Suppose that $C$ is weakly closed. Then $C^{c}$, the complement of $C$ is weakly open and hence open. Hence $C$ is closed.

On the other hand, let $C$ be closed. Let $x_0\notin C$. Then $d:=d(x_0,C)>0$. Let $D:={\{x\in X: d(x_0,x)<d}\}$. By the Hahn-Banach separation theorem, there exists $f\in X^*$ such that
\begin{equation*}
s:=\sup_{x\in C}f(x)<\inf_{x\in D}f(x).
\end{equation*}
Now, let us consider $U:={\{x\in X: f(x)>s}\}$. Then $U$ is weakly open  in $X$ and $U\cap C=\emptyset$. Thus $x_0\in U\subset C^{c}$. Therefore $C^{c}$ is weakly open, consequently, $C$ is closed.
\end{proof}

\begin{cor}
Let $X$ be a Banach space and $x_1,x_2,\dots,\in X$. If $x_n\xrightarrow{w}x$, then there exists a convex combination $y_k$ of $(x_n)$ such that $y_k\rightarrow x$ as $k\rightarrow \infty$.
\end{cor}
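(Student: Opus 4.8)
The plan is to recognize this as Mazur's lemma and to reduce it to Proposition \ref{wclosedsets}, which says that a convex set is norm-closed if and only if it is weakly closed. First I would form the set of all finite convex combinations of the sequence,
\[
K := \Big\{ \sum_{i=1}^{m} \lambda_i x_{n_i} : m \in \mathbb N,\ n_i \in \mathbb N,\ \lambda_i \geq 0,\ \sum_{i=1}^{m}\lambda_i = 1 \Big\}.
\]
By construction $K$ is convex, so its norm-closure $C := \overline{K}$ is a closed convex subset of $X$.

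Next, since $C$ is convex and norm-closed, Proposition \ref{wclosedsets} tells us that $C$ is also weakly closed. Now each $x_n$ is itself a (trivial) convex combination of the sequence, so $x_n \in K \subseteq C$ for every $n$. Because $x_n \xrightarrow{w} x$ and $C$ is closed in the weak topology, the weak limit $x$ must lie in $C$ (a set closed in a topology contains the limits of its convergent sequences in that topology).

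Finally, $x \in C = \overline{K}$ means precisely that $x$ lies in the norm-closure of the set of convex combinations; hence there is a sequence $(y_k) \subset K$ with $\|y_k - x\| \to 0$ as $k \to \infty$. By the definition of $K$, each $y_k$ is a convex combination of the $x_n$, which is exactly the assertion.

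The only genuine content is the transition from ``norm-closed convex'' to ``weakly closed,'' and that is supplied verbatim by Proposition \ref{wclosedsets}; the remainder is bookkeeping. The step deserving the most care is the middle one, where I claim the weak limit $x$ lands in $C$: here I rely on the fact, established earlier, that $\sigma(X,X^*)$ is Hausdorff, so weak limits are well defined, and on the elementary observation that a weakly closed set captures the weak limits of its sequences. I do not expect a serious obstacle beyond setting up $K$ correctly and invoking the proposition.
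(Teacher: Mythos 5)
Your proposal is correct and follows essentially the same route as the paper: take the norm-closed convex hull $C = \overline{co}\,\{x_n : n\in\mathbb N\}$, invoke Proposition \ref{wclosedsets} to conclude $C$ is weakly closed (hence contains the weak limit $x$), and read off the approximating convex combinations from $x \in \overline{K}$. The paper's proof is just a terser version of yours, leaving implicit the steps you spell out; your added details (that $x_n \in C$, that weakly closed sets capture weak sequential limits, and the extraction of $(y_k)$) are all sound.
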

\begin{proof}
Let $C=\overline{co}{\{x_n:n\in \mathbb N}\}$, the closure of the convex hull of ${\{x_n:n\in \mathbb N}\}$. Then clearly, $C$ is a closed convex set. Now the conclusion follows by Proposition \ref{wclosedsets}.
  \end{proof}

\begin{thm}\label{equalityoftop}
Let $(X,\|\cdot\|)$ be a finite dimensional normed linear space. Then the weak and the norm topologies on $X$ are the same.

\end{thm}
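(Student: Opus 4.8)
The plan is to use the fact, already established earlier in this section, that $\sigma(X,X^*)$ is always weaker than the norm topology; thus it only remains to prove the reverse inclusion, namely that every norm-open subset of $X$ is weakly open. Since a set is open (in either topology) precisely when it contains a basic open neighbourhood of each of its points, it suffices to show that around each point of a norm-open ball we can fit a basic weakly open set of the form $B(x_0;f_1,\dots,f_n;\epsilon)$.

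First I would fix a basis $e_1,\dots,e_n$ of $X$ (here $n=\dim X$) and introduce the associated coordinate functionals $f_1,\dots,f_n$ defined by $f_i\left(\sum_{j=1}^n a_j e_j\right)=a_i$. The crucial point, and the only place where finite dimensionality is genuinely used, is that each $f_i$ is continuous and hence belongs to $X^*$: on a finite dimensional normed space every linear functional is bounded (equivalently, all norms on $X$ are equivalent, so each coordinate functional is bounded with respect to $\|\cdot\|$). This guarantees that the sets $B(x_0;f_1,\dots,f_n;\epsilon)$ really are weakly open.

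Next I would establish the elementary norm estimate. Writing $x-x_0=\sum_{i=1}^n\big(f_i(x)-f_i(x_0)\big)e_i$ and applying the triangle inequality gives
\[
\n{x-x_0}\le \sum_{i=1}^n \m{f_i(x)-f_i(x_0)}\,\n{e_i}\le M\max_{1\le i\le n}\m{f_i(x)-f_i(x_0)},
\]
where $M:=\sum_{i=1}^n\n{e_i}$. Consequently, given a norm-open ball $B(x_0,r)$, the choice $\epsilon:=r/M$ yields $B(x_0;f_1,\dots,f_n;\epsilon)\subseteq B(x_0,r)$.

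Finally I would assemble these pieces. If $U$ is norm-open and $x_0\in U$, pick $r>0$ with $B(x_0,r)\subseteq U$; then $B(x_0;f_1,\dots,f_n;r/M)$ is a weakly open set containing $x_0$ and contained in $U$, so $U$ is weakly open. Hence the norm topology is contained in $\sigma(X,X^*)$, and together with the reverse inclusion the two topologies coincide. The main obstacle here is conceptual rather than computational: one must recognise that finite dimensionality is exactly what makes $X^*$ large enough to contain the coordinate functionals, after which everything reduces to a routine neighbourhood-fitting argument.
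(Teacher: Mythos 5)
Your proposal is correct and follows essentially the same route as the paper: both directions are handled identically, with the nontrivial inclusion obtained by fixing a basis, observing that finite dimensionality puts the coordinate functionals $f_1,\dots,f_n$ in $X^*$, and fitting the weak basic neighbourhood $B(x_0;f_1,\dots,f_n;\epsilon)$ inside a norm ball. The only (cosmetic) difference is that you derive the containment from the explicit triangle-inequality estimate $\n{x-x_0}\le M\max_i\m{f_i(x)-f_i(x_0)}$ with $M=\sum_i\n{e_i}$, whereas the paper invokes the equivalence of $\|\cdot\|$ with the coordinate max-norm $\|\cdot\|_\infty$ and identifies $B_\infty(x,\epsilon)$ with the weak basic set exactly; both steps rest on the same finite-dimensional fact.
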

\begin{proof}
Since every weak topology is weaker than the norm topology, every weak open set is open in the norm topology.

To prove the other way, let $U$ be an open set in the norm topology.

Let ${\{e_1,e_2,\dots,e_n}\}$ be a basis for $X$. If $x\in X$, there exists unique $(\alpha_1,\alpha_2,\dots, \alpha_n)\in \mathbb R^n$ such that
$x=\displaystyle \sum_{i=1}^n \alpha_ie_i$. Define $\|x\|_{\infty}=\displaystyle \max_{1\leq i\leq n}\|\alpha_i\|$. It is easy to check that $\|\cdot\|_{\infty}$ is a norm on $X$. Since all norms on a finite dimensional space are equivalent, the topology given by $\|\cdot\|_{\infty}$ and $\|\cdot\|$ are the same. Since $U$ is open in the topology given by $\|\cdot\|$, it is open in the topology given by $\|\cdot\|_{\infty}$. This means that given $x\in U$, there exists $\epsilon>0$ such that $B_{\infty}(x,\epsilon)\subset U$, where
\begin{align*}
B_{\infty}(x,\epsilon)&={\{y\in X: \|x-y\|_{\infty}<\epsilon}\}\\
                       &={\{y\in X: |y_i-x_i|<\epsilon,\; \text{for all}\; i=1,2,\dots,n}\}.
                       \end{align*}
Note that $U=\displaystyle \bigcup_{x\in U} B_{\infty}(x,\epsilon)$.

Now define $f_i:X\rightarrow \mathbb R \; (i=1,2,\dots,n)$ by
\begin{equation*}
  f_i(x)=\alpha_i,\; \text{for all}\;  x=\displaystyle \sum_{j=1}^n\alpha_je_j.
\end{equation*}
Then each $f_i\in X^*$ and

\begin{align*}
B_{\infty}(x,\epsilon)&={\{y=(y_i)\in X:|f_i(y-x)|<\epsilon,\; \text{for all}\; i=1,2,\dots,n}\}\\
                      &=B(x,f_1,f_2,\dots,f_n,\epsilon).\qedhere
\end{align*}
Therefore $U$ is weakly open.
\end{proof}

Next, we show that if $X$ is an infinite dimensional normed linear space, then the weak and the norm topologies are different. First we prove some basic results needed from Linear algebra.
\begin{prop}\label{kernelcomposition}
Let $V_1,V_2,V_3$ be normed linear spaces over $\mathbb K$ and $f:V_1\rightarrow V_3, \; g: V_1\rightarrow V_2$ be linear maps. Then there exists a linear map $h: V_2\rightarrow V_3$ such that
$$f=hog \Leftrightarrow g^{-1}(0)\subseteq f^{-1}(0).$$
\end{prop}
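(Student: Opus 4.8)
The plan is to treat the two implications separately. The forward implication $(\Rightarrow)$ is immediate: if $f = h \circ g$ for some linear $h$, then for any $x \in g^{-1}(0)$ we have $f(x) = h(g(x)) = h(0) = 0$, so $x \in f^{-1}(0)$; this gives $g^{-1}(0) \subseteq f^{-1}(0)$ with no further work and uses only linearity of $h$.

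For the reverse implication $(\Leftarrow)$, assume $g^{-1}(0) \subseteq f^{-1}(0)$. The idea is to build $h$ first on the range $R := g(V_1)$, which is a subspace of $V_2$, by the prescription $h(g(x)) := f(x)$. The crucial point is that this is well defined: if $g(x_1) = g(x_2)$, then $x_1 - x_2 \in g^{-1}(0) \subseteq f^{-1}(0)$, so $f(x_1) = f(x_2)$, and hence the value assigned to a point of $R$ does not depend on the choice of preimage. Linearity of $h$ on $R$ then follows directly from linearity of $f$ and $g$.

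The remaining step, which I expect to be the only genuine obstacle, is to extend $h$ from the subspace $R$ to all of $V_2$ while preserving linearity. Here I would invoke the existence of an algebraic complement: every subspace of a vector space admits a complement (choose a Hamel basis of $R$ and extend it to a Hamel basis of $V_2$, appealing to Zorn's lemma when $V_2$ is infinite dimensional), so there is a subspace $W \subseteq V_2$ with $V_2 = R \oplus W$. I would then set $h$ equal to $0$ on $W$ and extend by linearity, so that $h(r + w) := h(r)$ for $r \in R$ and $w \in W$. This $h$ is linear on all of $V_2$ and agrees on $R$ with the map defined above, whence $h(g(x)) = f(x)$ for every $x \in V_1$, i.e. $f = h \circ g$.

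One remark worth recording is that the statement asks only for a \emph{linear} map $h$, with no continuity requirement imposed, so I need not worry about boundedness of the extension. In particular the norm structure on the $V_i$ plays no role, and the argument is purely linear-algebraic.
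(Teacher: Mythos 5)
Your proof is correct and follows essentially the same route as the paper: define $h$ on the range $g(V_1)$ by $h(g(x)) = f(x)$, verify well-definedness via $g^{-1}(0) \subseteq f^{-1}(0)$, and then extend to all of $V_2$. The only difference is that the paper simply says ``extend $h$ to a linear map on $V_2$'' while you correctly supply the missing justification (an algebraic complement via a Hamel basis and Zorn's lemma) and rightly observe that no continuity is required, so the argument is purely linear-algebraic.
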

\begin{proof}
Assume that $g^{-1}(0)\subset f^{-1}(0)$. Define $h: g(V_1)\rightarrow V_3$ by
\begin{equation*}
h(g(x))=f(x),\; \text{for all}\; x\in V_1.
\end{equation*}
First we claim that $h$ is well defined. Let $x_1,x_2\in V_1$ be such that $g(x_1)=g(x_2)$. Then $x_1-x_2\in g^{-1}(0)\subset f^{-1}(0)$. That is  $f(x_1)=f(x_2)$. Hence $h$ is well defined. Now, extend $h$ to a linear map on $V_2$. Again denote the extension by $h$. Then clearly, $h(g)=f$.

The other implication is clear.
\end{proof}
\begin{lemma}\label{linearcombinationoffnls}
Let $f_1,f_2,\dots,f_n$ be linear functional on a normed linear space $X$ such that $\displaystyle \bigcap_{i=1}^nf_i^{-1}(0)\subseteq f^{-1}(0)$. Then there exists  scalars $\alpha_1,\alpha_2,\dots,\alpha_n$
such that $f=\displaystyle \sum_{i=1}^n \alpha_if_i$.
\end{lemma}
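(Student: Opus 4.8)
The plan is to reduce the statement to Proposition \ref{kernelcomposition} by bundling the functionals $f_1,f_2,\dots,f_n$ into a single linear map into $\mathbb K^n$. Concretely, I would define $g:X\rightarrow \mathbb K^n$ by
\[
g(x)=\big(f_1(x),f_2(x),\dots,f_n(x)\big),
\]
which is clearly linear. Its kernel is exactly $g^{-1}(0)=\bigcap_{i=1}^n f_i^{-1}(0)$, so the hypothesis $\bigcap_{i=1}^n f_i^{-1}(0)\subseteq f^{-1}(0)$ reads precisely as $g^{-1}(0)\subseteq f^{-1}(0)$.

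Applying Proposition \ref{kernelcomposition} with $V_1=X$, $V_2=\mathbb K^n$, and $V_3=\mathbb K$ then produces a linear map $h:\mathbb K^n\rightarrow \mathbb K$ satisfying $f=h\circ g$. Note that the proposition applies verbatim, since $\mathbb K^n$ and $\mathbb K$ are themselves normed linear spaces.

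It remains to unwind what such an $h$ looks like. Since $h$ is a linear functional on the finite-dimensional space $\mathbb K^n$, writing $e_1,\dots,e_n$ for the standard basis and setting $\alpha_i:=h(e_i)$, one has $h(y_1,\dots,y_n)=\sum_{i=1}^n \alpha_i y_i$. Substituting $y=g(x)$ gives, for every $x\in X$,
\[
f(x)=h(g(x))=\sum_{i=1}^n \alpha_i f_i(x),
\]
which is the claimed identity $f=\sum_{i=1}^n \alpha_i f_i$.

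I do not expect a serious obstacle here: the only points requiring care are verifying that $g^{-1}(0)$ really coincides with the common kernel (immediate from the definition of $g$) and recognizing the standard coordinate form of a functional on $\mathbb K^n$ (elementary linear algebra). The substantive content has already been absorbed into Proposition \ref{kernelcomposition}. Alternatively one could argue directly by induction on $n$, peeling off one functional at a time, but the factorization approach is cleaner and exploits the result just proved.
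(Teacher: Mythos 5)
Your proposal is correct and follows essentially the same route as the paper: both define $g:X\rightarrow \mathbb K^n$ by $g(x)=(f_1(x),\dots,f_n(x))$, observe that the hypothesis says exactly $g^{-1}(0)\subseteq f^{-1}(0)$, invoke Proposition \ref{kernelcomposition} to factor $f=h\circ g$, and then write the functional $h$ on $\mathbb K^n$ in coordinate form $h(y)=\sum_{i=1}^n\alpha_i y_i$ to conclude $f=\sum_{i=1}^n\alpha_i f_i$. The only differences are cosmetic (the paper works over $\mathbb R$ where you allow a general scalar field $\mathbb K$, and you spell out the kernel identification slightly more explicitly), so no changes are needed.
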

\begin{proof}
Take  $V_1=X, \; V_2=\mathbb R^n,\; V_3=\mathbb R,\; f=f$ and $g(x)=(f_1(x), f_2(x),\dots, f_n(x)$. Applying Proposition \ref{kernelcomposition}, we get a linear map $h:\mathbb R^n\rightarrow \mathbb R$ such that $f(x)=h(g(x))$ for all $x\in X$. The map $h$ can be written as
\begin{equation*}
h(y)=\displaystyle \sum_{i=1}^n\alpha_iy_i \; \text{for some}\; \alpha_1,\alpha_2,\dots,\alpha_n \; \text{and for every}\; y=(y_i)\in \mathbb R^n.
\end{equation*}
Therefore $f(x)=\displaystyle \sum_{i=1}^n \alpha_if_i(x)$ for every $x\in X$.
\end{proof}

\begin{thm}\label{infinitedimtopsnotsame}
Let $X$ be an infinite dimensional normed linear space and $U$ be a weak open set in $X$. Then $U$ is not bounded.
\end{thm}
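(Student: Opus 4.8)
The plan is to exploit the mismatch between a basic weak-open set, which is cut out by only finitely many functionals, and the infinite dimensionality of $X$: finitely many functionals cannot constrain every direction, so any basic weak neighbourhood must contain an entire line and hence be unbounded.

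First I would fix an arbitrary point $x_0 \in U$. Since $U$ is weakly open, by the description of basic open sets in the topology $\sigma(X,X^*)$ there exist $f_1,\dots,f_n \in X^*$ and $\epsilon>0$ with $B(x_0;f_1,\dots,f_n;\epsilon) \subseteq U$; recall this set consists of all $x \in X$ with $|f_i(x)-f_i(x_0)|<\epsilon$ for each $i$. It therefore suffices to show that this single basic set is already unbounded.

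The crucial step is to produce a nonzero vector annihilated by all of the $f_i$. Consider the linear map $T : X \to \mathbb{F}^n$ defined by $T(x)=(f_1(x),\dots,f_n(x))$, whose kernel is the subspace $Y := \bigcap_{i=1}^n f_i^{-1}(0)$. If $Y=\{0\}$, then $T$ is injective and embeds $X$ into the $n$-dimensional space $\mathbb{F}^n$, forcing $X$ to be finite dimensional, contrary to hypothesis. Hence $Y\neq\{0\}$, and I may choose $0\neq y\in Y$. This is the only place where infinite dimensionality is used, and it is the heart of the argument.

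Finally, for every scalar $t$ I have $f_i(x_0+ty)-f_i(x_0)=t\,f_i(y)=0$ for each $i$, so the whole line $\{x_0+ty : t\in\mathbb{F}\}$ lies inside $B(x_0;f_1,\dots,f_n;\epsilon)\subseteq U$. Since $\|x_0+ty\|\geq |t|\,\|y\|-\|x_0\|\to\infty$ as $|t|\to\infty$, this line is unbounded, and consequently $U$ is unbounded. I do not expect any genuine obstacle once the nonvanishing of $Y$ is secured; the surrounding verifications are routine.
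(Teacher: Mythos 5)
Your proof is correct, and its skeleton matches the paper's: reduce to a basic weak neighbourhood determined by finitely many functionals $f_1,\dots,f_n$, show the joint kernel $\bigcap_{i=1}^n f_i^{-1}(0)$ is nontrivial, and push a line through the open set. The genuine difference lies in how you establish nontriviality of the kernel. The paper argues by contradiction through Lemma \ref{linearcombinationoffnls} (itself resting on Proposition \ref{kernelcomposition}): if the kernel were $\{0\}$, every $f\in X^*$ would vanish on it, hence be a linear combination of $f_1,\dots,f_n$, making $X^*$ finite dimensional --- and this tacitly uses the further fact that an infinite dimensional $X$ has infinite dimensional $X^*$. You instead observe directly that a trivial kernel makes $T(x)=(f_1(x),\dots,f_n(x))$ an injective linear map of $X$ into $\mathbb{F}^n$, forcing $\dim X\leq n$; this is more elementary, needs no auxiliary lemmas, and keeps the contradiction inside $X$ itself. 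Two further small points in your favour: you work at an arbitrary base point $x_0\in U$ rather than assuming $0\in U$ as the paper does without comment (translation invariance of the weak topology justifies the paper's reduction, but it is left unsaid), and you make the unboundedness quantitative via $\|x_0+ty\|\geq |t|\,\|y\|-\|x_0\|$, where the paper only notes that the unbounded subspace $N$ sits inside $\mathcal O$. What the paper's longer route buys is the reusable Lemma \ref{linearcombinationoffnls}, which is of independent interest in duality arguments; your route buys brevity and self-containment.
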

\begin{proof}
Let $\mathcal O$ be a non empty weak open set in $X$. Assume that $0\in \mathcal O$.  Choose $\epsilon>0$ and $f_1,f_2,\dots,f_n\in X^*$ such that $${\{x\in X: |f_i(x)|<\epsilon,\; i=1,2,\dots,n}\}\subset \mathcal O.$$ Let $N:={\{x\in X: f_i(x)=0,\; i=1,2,\dots,n}\}=\displaystyle \bigcap _{i=1}^nf_i^{-1}(0)$.

We claim that $N\neq {\{0}\}$. If $N={\{0}\}$, then for every $f\in X^*$, we have $N\subset f^{-1}(0)$. Hence there exists scalars $\alpha_1,\alpha_2,\dots,\alpha_n$ such that
$f=\sum_{i=1}^n\alpha_if_i$ for all $f\in X^*$, from which we can conclude that $X^*$ is finite dimensional. This is a contradiction. Hence there exists $x\in X$ such that $x\in N$. Let $\lambda \in \mathbb F$. Then $\lambda x\in N$. Thus $N$ is not bounded and $N\subset \big\{x\in X: |f_i(x)|<\epsilon,\; i=1,2,\dots,n \big\}\subset \mathcal O$. This concludes that $\mathcal O $ is unbounded.
\end{proof}

\begin{rmk}\label{fdmlcharoftops}
 If $X$ is infinite dimensional normed linear space, then the open unit disc ${\{x\in X: \|x\|<1}\}$ is open in the norm topology. But it cannot be open in the weak topology by Theorem \ref{infinitedimtopsnotsame}. Hence  we have the following:\\
  A normed linear space is finite dimensional if and only if the $w$-topology and the norm topology are the same.
\end{rmk}

\begin{thm}[Schur's Lemma]\label{Schurslemma}
In $(\ell^1,\|\cdot\|_1)$, the weak convergence and the norm convergence are the same. In other words, if $(x_n)\subset X$ is a sequence, then
$x_n\xrightarrow{w}x$ if and only if $x_n\rightarrow x$.
\end{thm}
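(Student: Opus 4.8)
The plan is to prove only the nontrivial implication, that weak convergence forces norm convergence, since the converse is automatic: if $x_n \to x$ in $\|\cdot\|_1$ then $|f(x_n) - f(x)| \le \|f\|\,\|x_n - x\|_1 \to 0$ for every $f \in (\ell^1)^*$, which is weak convergence by Theorem \ref{weakconvergence}. Since $x_n \xrightarrow{w} x$ implies $x_n - x \xrightarrow{w} 0$, and $\|x_n - x\|_1 \to 0$ is exactly what must be shown, I may replace $x_n$ by $x_n - x$ and assume throughout that $x = 0$. The goal is therefore to show that $x_n \xrightarrow{w} 0$ in $\ell^1$ implies $\|x_n\|_1 \to 0$.

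Two preliminary facts will be used. First, identifying $(\ell^1)^*$ with $\ell^\infty$ via $f_y(x) = \sum_{k} x(k)\,y(k)$, weak convergence $x_n \xrightarrow{w} 0$ means $\sum_k x_n(k)\,y(k) \to 0$ for every $y \in \ell^\infty$; testing against the coordinate vectors $e_k \in \ell^\infty$ gives coordinatewise convergence $x_n(k) \to 0$ for each fixed $k$. Second, weakly convergent sequences in a Banach space are bounded, so $M := \sup_n \|x_n\|_1 < \infty$. Suppose, for contradiction, that $\|x_n\|_1 \not\to 0$; then after passing to a subsequence there is a $\delta > 0$ with $\|x_n\|_1 \ge \delta$ for all $n$.

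The core of the argument is a \emph{gliding hump} construction. I would build, by induction, indices $n_1 < n_2 < \cdots$ and integers $0 = p_0 < p_1 < p_2 < \cdots$ so that the $\ell^1$-mass of $x_{n_j}$ is essentially concentrated on the block $p_{j-1} < k \le p_j$; concretely, $\sum_{k \le p_{j-1}} |x_{n_j}(k)| < \delta/5$ and $\sum_{k > p_j} |x_{n_j}(k)| < \delta/5$. The tail estimate is achieved by choosing $p_j$ large, using convergence of the series $\sum_k |x_{n_j}(k)|$; the head estimate is achieved by choosing $n_j$ large, using coordinatewise convergence applied to the finitely many coordinates $k \le p_{j-1}$. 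Since $\|x_{n_j}\|_1 \ge \delta$, the block then carries mass $\sum_{p_{j-1} < k \le p_j} |x_{n_j}(k)| \ge \delta - 2\delta/5 = 3\delta/5$.

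Finally I would define $y \in \ell^\infty$ with $\|y\|_\infty \le 1$ by setting, for $p_{j-1} < k \le p_j$, the unimodular sign $y(k) = \overline{x_{n_j}(k)}/|x_{n_j}(k)|$ (and $y(k) = 0$ where the entry vanishes), so that $x_{n_j}(k)\,y(k) = |x_{n_j}(k)|$ on that block. Splitting $\sum_k x_{n_j}(k)\,y(k)$ into its head, block, and tail parts and applying the three estimates yields $\bigl|\sum_k x_{n_j}(k)\,y(k)\bigr| \ge 3\delta/5 - \delta/5 - \delta/5 = \delta/5 > 0$ for every $j$, contradicting $\sum_k x_n(k)\,y(k) \to 0$. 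Hence $\|x_n\|_1 \to 0$. The delicate step, and the main obstacle, is the bookkeeping in the gliding hump: one must choose the indices so that the blocks $(p_{j-1}, p_j]$ are disjoint and the head and tail leakage stay below the threshold simultaneously, which is precisely where the two preliminary facts (tail-of-series for the tail, coordinatewise convergence for the head) are invoked in tandem.
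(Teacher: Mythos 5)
Your proof is correct, but note that the paper itself states Schur's Lemma \emph{without} proof (the theorem is followed immediately by a remark), so there is no internal argument to compare against; your gliding-hump construction is the standard, essentially canonical proof of this result, and it supplies exactly what the notes omit. Checking the details: the reduction to $x=0$ is legitimate; coordinatewise convergence follows from testing against $e_k\in\ell^\infty$ under the identification $(\ell^1)^*\cong\ell^\infty$; the inductive bookkeeping works because the head estimate involves only the finitely many coordinates $k\le p_{j-1}$ (so coordinatewise convergence suffices to pick $n_j$) while the tail estimate concerns a single fixed element $x_{n_j}\in\ell^1$ (so a tail-of-series argument picks $p_j$); the blocks $(p_{j-1},p_j]$ with $p_0=0$ partition $\mathbb N$, so $y$ is well defined with $\|y\|_\infty\le 1$; and the final estimate $\bigl|\sum_k x_{n_j}(k)y(k)\bigr|\ge 3\delta/5-\delta/5-\delta/5=\delta/5$ does contradict $f_y(x_{n_j})\to 0$. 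One cosmetic point: the boundedness fact $M=\sup_n\|x_n\|_1<\infty$, which you list as a preliminary, is never actually used --- your version of the hump argument needs only the lower bound $\|x_{n_j}\|_1\ge\delta$ on the chosen subsequence, the coordinatewise convergence, and the $\ell^1$ tails; you could delete that sentence without loss. (Some variants of the argument, e.g.\ those that sum the leakage over all $j$ with weights $2^{-j}$, do need uniform boundedness, so mentioning it is harmless, just redundant here.)
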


\begin{rmk}
  \begin{enumerate}
  \item By Remark \ref{fdmlcharoftops} and Schur's Lemma, we can observe that the convergent sequences are not enough to describe the $w$-topology. Instead, we need convergent nets for this purpose.
      \item By Theorem \ref{Schurslemma}, we can conclude that $B_{\ell^1}$ is not compact.
      \end{enumerate}
\end{rmk}

\newpage
\section{Weak-Star topology}

Since $X^*$ is a normed linear space we can define $\sigma(X^*,X^{**})$, the weak topology on $X^*$ generated by $X^{**}$.  But we are interested in the $\sigma(X^*,X)$, the weak topology generated by $X$. This can be done as follows;
Note that $X$ can be embedded in $X^{**}$ by the canonical map $J: X\rightarrow X^{**}$ given by
\begin{equation}
J(x)=j_{x},
\end{equation}
where $j_{x}:X^*\rightarrow \mathbb F$ is defined by
\begin{equation}
j_{x}(f)=f(x)\; \text{ for all}\;  f\in X^*.
\end{equation}
By definition we have $\|j_{x}\|=\sup{\{|f(x)|: f\in S_{X^*}}\}=\|x\|$, by a consequence of the  Hahn Banach theorem. Thus $\|Jx\|=\|x\|$ for all $x\in X$. Hence $J$ is an isometry. We can conclude that $X$ is isometrically isomorphic with $J(X)$. If $J(X)=X^{**}$, we call $X$ to be reflexive.

By the above embedding we can treat $X$ as a subspace of $X^{**}$.

\begin{defn}
Let $X$ be a normed linear space. The topology $\sigma(X^*,X)$ is called the weak star topology ($w^*$-topology) on $X^*$.
\end{defn}

Note that a set $G$ is an open set in the $w^*$-topology of $X^*$ if and only if for every $g\in G$ there exists an $\epsilon>0$ and $x_1,x_2,\dots,x_n\in X$ such that  $$\big\{f\in X^*: |(f-g)(x_i)|<\epsilon \big\}\subset G.$$

\begin{rmk}
Note that the $w^*$-topology on $X^*$ is weaker than the weak topology on $X^{**}$, since $X\subseteq X^{**}$. If $X$ is reflexive, then the $w^*$-topology on $X^*$ and the $w$-topology on $X^*$ are the same.
\end{rmk}

If a sequence $(f_n) \subset X^*$ converges to $f\in X^*$ in $\sigma(X^*,X)$, then we express this by writing $f_n\xrightarrow{w^*}f$.
\begin{prop}
 Let $X$ be a normed linear space and  $(f_n) \subset X^*$. Then $f_n\xrightarrow{w^*}f\in X^*$ if and only if $f_n(x)$ converges to $f(x)$ for all $x\in X$.
\end{prop}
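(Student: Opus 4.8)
The plan is to mirror the proof of Theorem \ref{weakconvergence} almost verbatim, because the $w^*$-topology $\sigma(X^*,X)$ is, by Definition of the weak topology in Section 2, nothing but the weak topology on the set $X^*$ generated by the family of evaluation functionals $\{j_x : x\in X\}$, where $j_x(g)=g(x)$. The only conceptual point is to keep straight that here the underlying set is $X^*$ and the generating functions are indexed by $x\in X$; once this dictionary is fixed, the subbasic and basic $w^*$-open sets are exactly the ones already recorded in the paragraph preceding the statement, and the argument is purely formal.

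For the forward direction I would assume $f_n\xrightarrow{w^*}f$, fix an arbitrary $x\in X$ and $\epsilon>0$, and consider the subbasic $w^*$-open neighbourhood $U=\{g\in X^*:|g(x)-f(x)|<\epsilon\}$ of $f$. By the definition of convergence in a topological space, there is an $n_0$ with $f_n\in U$ for all $n\geq n_0$, and membership in $U$ says precisely that $|f_n(x)-f(x)|<\epsilon$. Since $x$ and $\epsilon$ were arbitrary, this yields $f_n(x)\to f(x)$ for every $x\in X$.

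For the converse I would assume $f_n(x)\to f(x)$ for all $x\in X$ and let $U$ be any $w^*$-open set with $f\in U$. By the characterization of $w^*$-open sets stated just above the proposition, there exist $x_1,\dots,x_m\in X$ and $\epsilon>0$ such that the basic open set $\{g\in X^*:|g(x_i)-f(x_i)|<\epsilon,\ i=1,\dots,m\}$ is contained in $U$. For each $i$, pointwise convergence supplies an index $n_i$ beyond which $|f_n(x_i)-f(x_i)|<\epsilon$; putting $n_0=\max_{1\leq i\leq m}n_i$, all $m$ inequalities hold simultaneously for $n\geq n_0$, so $f_n$ lies in this basic open set and hence in $U$ for all $n\geq n_0$. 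Therefore $f_n\xrightarrow{w^*}f$.

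I do not anticipate any real obstacle: there is no compactness, completeness, or Hahn--Banach input needed, and both implications reduce to unwinding the definition of the generated weak topology together with the elementary $\max$ over finitely many indices. The single thing to be careful about is notational hygiene, namely not conflating the roles of $X$ and $X^*$ and writing the neighbourhoods in terms of the evaluations $g\mapsto g(x)$ rather than the functionals themselves, exactly as in the analogous Theorem \ref{weakconvergence}.
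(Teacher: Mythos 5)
Your proposal is correct and follows essentially the same route as the paper's own proof: both directions are handled by unwinding the definition of $w^*$-convergence, using a subbasic neighbourhood $\{g\in X^*:|g(x)-f(x)|<\epsilon\}$ for the forward implication and a basic neighbourhood determined by finitely many points $x_1,\dots,x_m\in X$ together with $n_0=\max_i n_i$ for the converse. The only difference is cosmetic: the paper presents the converse direction first, while you present the forward direction first.
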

\begin{proof}
   Suppose that $f_n(x)\rightarrow f(x)$ for all $x\in X$. Let $\mathcal O$ be a $w^*$-open set containing $f$. There exists an $\epsilon>0$ and $x_1,x_2,\dots,x_m\in X$ such that
\begin{equation*}
\big\{g\in X^*:|f(x_i)-g(x_i)|<\epsilon,\; i=1,2, \dots, m \big\} \subset \mathcal O.
\end{equation*}
We know that $f_{n}(x_i)\rightarrow f(x_i)$ for $i=1,2,\dots m$. Choose $n_i\in \mathbb N$ such that

\begin{equation*}
|f_n(x_i)-f(x_i)|<\epsilon \; \text{ for all}\; n\geq n_i.
\end{equation*}
Let $N=\max{\{n_i:i=1,2,\dots,m}\}$. Then for each $i=1,2,\dots, m$, we have
\begin{equation*}
|f_n(x_i)-f(x_i)|<\epsilon,\; \text{for all}\; n\geq N.
\end{equation*}
Thus $f_n\in \mathcal O$ for all $n\geq N$. So $f_n\xrightarrow{w^*}f$.

On the other hand, Let $\mathcal O={\{g\in X^*: |f(x)-g(x)|<\epsilon}\}$ be a $w^*$-open set containing $f$. Since $f_n\xrightarrow{w^*}f$, there exists $n_0\in \mathbb N$ such that $f_n\in \mathcal O$ for all $n\geq n_0$. That means, $|f_n(x)-f(x)|<\epsilon$ for all $n\geq n_0$. Hence $f_n(x)\rightarrow f(x)$. This is true for all $x\in X$.
\end{proof}

\begin{thm}\label{equalityoftop}
Let $X$ be a finite dimensional normed linear space. Then
Then the $w^*$-topology on $X^*$ and the norm topology on $X^*$ coincide
\end{thm}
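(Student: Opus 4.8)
The plan is to sandwich the two topologies, obtaining one inclusion for free and working only for the reverse. The easy direction requires no finite-dimensionality: each evaluation functional $j_x \in X^{**}$, defined by $j_x(f) = f(x)$, is a bounded (hence norm-continuous) functional on $X^*$, so the subbasic $w^*$-open sets $\{f \in X^* : |(f-g)(x)| < \epsilon\}$ are norm-open. By the general fact that the weak topology generated by a family of norm-continuous functions is weaker than the norm topology, $\sigma(X^*,X)$ is weaker than the norm topology on $X^*$. This half costs nothing.

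For the reverse inclusion I would route through reflexivity. Since $X$ is finite dimensional, $X^*$ and $X^{**}$ are finite dimensional with $\dim X = \dim X^* = \dim X^{**} = n$. The canonical map $J : X \to X^{**}$, $J(x) = j_x$, is an isometry (by the Hahn--Banach consequence recalled in the excerpt) and therefore injective; an injective linear map between finite dimensional spaces of equal dimension is surjective, so $J(X) = X^{**}$ and $X$ is reflexive. Hence the family $\{j_x : x \in X\}$ that generates the $w^*$-topology is exactly all of $X^{**}$, which gives $\sigma(X^*,X) = \sigma(X^*,X^{**})$; that is, the $w^*$-topology on $X^*$ coincides with the weak topology on $X^*$.

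Finally I would invoke the earlier theorem stating that the weak and norm topologies agree on any finite dimensional normed linear space, applied now to the finite dimensional space $X^*$ itself. This yields that the weak topology on $X^*$ equals its norm topology, and combining with the previous step shows that the $w^*$-topology on $X^*$ equals the norm topology, completing the proof.

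The main obstacle is the middle step: verifying that $X$ is reflexive, which is the only place finite-dimensionality is genuinely used. Everything there hinges on the dimension count $\dim X = \dim X^* = \dim X^{**}$ together with injectivity of $J$; once $J(X) = X^{**}$ is established, the identification $\sigma(X^*,X) = \sigma(X^*,X^{**})$ is immediate and the finite-dimensional weak-equals-norm theorem finishes things. An alternative that avoids reflexivity would be to prove the reverse inclusion by hand, fixing a basis of $X^*$, building coordinate functionals on $X^*$, and checking that each norm ball contains a basic $w^*$-open set $B(g; j_{x_1},\dots,j_{x_n}; \delta)$; but this merely re-enacts the finite-dimensional argument already available, so the reflexivity route is more economical.
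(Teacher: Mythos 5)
Your proposal is correct, but it follows a genuinely different route from the paper. The paper proves the nontrivial inclusion by hand: it fixes a basis $\{e_1,\dots,e_n\}$ of $X$, introduces the auxiliary norm $|||f||| := \max_{1\leq i\leq n}|f(e_i)|$ on $X^*$, invokes equivalence of norms on the finite dimensional space $X^*$ to find $\delta>0$ with $|||f-f_0|||<\delta \Rightarrow \|f-f_0\|<\epsilon$, and observes that $\{f : \max_i |f(e_i)-f_0(e_i)|<\delta\}$ is precisely a basic $w^*$-open set inside the given norm ball --- this is essentially the ``alternative'' you sketch and dismiss at the end. You instead factor the statement through two reductions: (i) finite dimensional spaces are reflexive, proved by the dimension count $\dim X=\dim X^*=\dim X^{**}$ plus injectivity of the isometry $J$, so that $\sigma(X^*,X)=\sigma(X^*,X^{**})$; and (ii) the earlier theorem that weak and norm topologies agree on any finite dimensional space, applied to $X^*$. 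Both steps are sound (note only that $\dim X^*=\dim X$ silently uses the standard fact that every linear functional on a finite dimensional normed space is continuous, so the continuous and algebraic duals coincide; and that the theorem you cite in step (ii), though stated in the paper with $\mathbb{R}^n$ coordinates, applies to the space $X^*$ without change). What your route buys is economy and a conceptual explanation --- the $w^*$- and weak topologies on $X^*$ literally coincide in this setting, so nothing new needs to be proved; what the paper's direct argument buys is self-containedness, producing the explicit basic $w^*$-neighbourhood inside a given norm ball without appealing to reflexivity or to the earlier theorem. Ultimately both proofs rest on the same ingredient, equivalence of norms in finite dimensions, yours via the quoted theorem and the paper's via the norm $|||\cdot|||$.
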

\begin{proof}
We know that the $w^*$-open set is open in the norm topology. Let $\mathcal O$ be an open set and $f_0\in \mathcal O$. Choose $\epsilon>0$ such that $f_0+\epsilon B_{X^*}\subset \mathcal O$. Let ${\{e_1,e_2,\dots, e_n}\}$ be a basis for $X$. Define
\begin{equation*}
|||f|||:=\displaystyle \max_{1\leq i\leq n} |f(e_i)|,\; f\in X^*.
\end{equation*}
Then $|||\cdot |||$ is a  norm on $X^*$. Since all norms on a finite dimensional normed linear space are equivalent, there exists a $\delta>0$ such that $|||f|||\leq \delta$, we have $\|f\|<\epsilon$. Then the
$w^*$-open set $$ {\{f: \max_{1\leq i\leq n} |f(e_i)-f_0(e_i)|<\delta}\}$$ is contained in ${\{f: \|f-f_0\|<\epsilon}\}$. Hence $\mathcal O$ is $w^*$- open.
\end{proof}

Here we list out some more properties of the $w^*$-topology:

\begin{prop}
Let $X$ be a normed linear space and $(f_n)\subset X^*$. Then the following hold;

\begin{enumerate}
\item if $f_n\rightarrow f$ in $X^*$, then $f_n\xrightarrow{w^*}f$.
\item if $f_n\xrightarrow{w^*}f$, then $(f_n)$ is bounded and $\|f\|\leq \liminf_{n}\|f_n\|$.
\item if $f_n\xrightarrow{w^*}f$ and $(x_n)\subset X$ is such that $x_n\rightarrow x\in X$, then $f_n(x_n)\rightarrow f(x)$ as $n\rightarrow \infty$.
\end{enumerate}
\end{prop}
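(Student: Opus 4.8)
The plan is to handle the three parts in sequence, leaning on the characterization of $w^*$-convergence proved just above (namely $f_n \xrightarrow{w^*} f$ if and only if $f_n(x) \to f(x)$ for every $x \in X$) together with the Banach-Steinhauss theorem (Theorem \ref{bsthm}).

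Part (1) is immediate from the definition of the dual norm: for each fixed $x \in X$ we have $|f_n(x) - f(x)| = |(f_n - f)(x)| \leq \|f_n - f\|\,\|x\|$, and the right-hand side tends to $0$ once $f_n \to f$ in $X^*$. Thus $f_n(x) \to f(x)$ for every $x$, which is exactly $w^*$-convergence by the characterization above.

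For part (2) I would regard $(f_n)$ as a sequence in $\mathcal B(X,\mathbb K) = X^*$ and apply the Banach-Steinhauss theorem with domain $X$. Since $f_n \xrightarrow{w^*} f$, we have the pointwise convergence $f_n(x) \to f(x)$ for all $x \in X$. Banach-Steinhauss then yields both conclusions at once: the limit functional is bounded with $\|f\| \leq \liminf_n \|f_n\|$, and the uniform boundedness principle underlying it forces $\sup_n \|f_n\| < \infty$, so $(f_n)$ is bounded. This is exactly parallel to the proof of the weak-convergence analogue given earlier, except that here the operators act on $X$ rather than on $X^*$, so it is completeness of the domain that is invoked.

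For part (3) I would split $f_n(x_n) - f(x) = f_n(x_n - x) + \big(f_n(x) - f(x)\big)$ and bound the two pieces separately. The first obeys $|f_n(x_n - x)| \leq \|f_n\|\,\|x_n - x\|$, which tends to $0$ because $(f_n)$ is bounded by part (2) and $x_n \to x$ in norm; the second tends to $0$ because $f_n \xrightarrow{w^*} f$ gives $f_n(x) \to f(x)$ at the fixed point $x$. Adding the two estimates gives $f_n(x_n) \to f(x)$. I expect the only real obstacle to be part (2): extracting norm-boundedness of $(f_n)$ from the merely pointwise information requires the uniform boundedness principle. Once (2) is established, parts (1) and (3) are routine, with (3) being a standard $\epsilon$-splitting that uses the bound from (2) in an essential way.
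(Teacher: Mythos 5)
The paper states this proposition without any proof, so there is nothing to compare against line by line; your argument is the standard one and deliberately mirrors the paper's proof of the weak-convergence analogue (the proposition following Theorem \ref{bsthm}, proved there via evaluation maps and Banach--Steinhauss). Parts (1) and (3) are fine exactly as written: (1) is the dual-norm estimate $|(f_n-f)(x)|\leq\|f_n-f\|\,\|x\|$, and (3) is the routine splitting $f_n(x_n)-f(x)=f_n(x_n-x)+\bigl(f_n(x)-f(x)\bigr)$, which, as you say, uses the boundedness from (2) in an essential way.

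The one substantive point is the remark you made only in passing: in part (2) the uniform boundedness principle is applied with \emph{domain} $X$, so completeness of $X$ is genuinely required --- and the proposition as stated assumes only that $X$ is a normed linear space. This is not a removable technicality, because part (2) is false for incomplete $X$. Take $X=c_{00}$, the finitely supported sequences with the supremum norm, and $f_n(x)=n\,x_n$ (the $n$-th coordinate scaled by $n$). For each fixed $x\in c_{00}$ one has $f_n(x)=0$ eventually, so $f_n\xrightarrow{w^*}0$, yet $\|f_n\|=n$ is unbounded; choosing $x_n=e_n/\sqrt{n}\rightarrow 0$ also kills part (3), since $f_n(x_n)=\sqrt{n}$. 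So your appeal to Theorem \ref{bsthm} (whose hypothesis is that the domain is Banach) is the right move, but you should state explicitly that you are strengthening the hypothesis to $X$ Banach for parts (2) and (3). It is worth noting the asymmetry with the weak-convergence analogue: there Banach--Steinhauss acts on $X^*$, which is complete regardless of whether $X$ is, so that earlier proposition actually holds for arbitrary normed $X$ even though the paper assumes $X$ Banach --- the hypotheses in the two propositions are, in effect, swapped relative to where completeness is really needed. With that correction recorded, your proof is complete and is surely the argument the author intended.
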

\begin{rmk}
  The $w^*$-topology on $X^*$ is Hausdorff.
\end{rmk}

\section{Banach-Alaouglu theorem}

In this section we prove the Banach-Alaouglu's theorem.
\begin{thm}[Alaouglu's theorem]
Let $X$ be a normed linear space and $B_{X^*}:={\{f\in X^*: \|f\|\leq 1}\}$. Then $B_{X^*}$ is $w^*$-compact in $X^*$.
\end{thm}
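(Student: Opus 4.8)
The plan is to prove the Banach--Alaoglu theorem via Tychonoff's theorem, which is the standard and most natural route given that the excerpt has just stated Tychonoff (item~11 of the topological preliminaries) and has set up the product topology and the weak topology as a subspace of a product. The central idea is that a bounded linear functional $f \in B_{X^*}$ is determined by its values $(f(x))_{x \in X}$, and each such value lies in a compact interval of scalars. This exhibits $B_{X^*}$ as a subset of a product of compact sets, and the $w^*$-topology is exactly the restriction of the product topology; Tychonoff then supplies compactness of the ambient product, and we need only check that $B_{X^*}$ sits inside it as a closed subset.

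First I would set up the compact product space. For each $x \in X$, let $D_x := \{\lambda \in \mathbb{F} : |\lambda| \leq \|x\|\}$, which is a compact subset of $\mathbb{F}$ (a closed disc, using that $\mathbb F$ is $\mathbb R$ or $\mathbb C$). Form the product $K := \prod_{x \in X} D_x$, which by Tychonoff's theorem is compact in the product topology. Then I would define the evaluation map $\Phi : B_{X^*} \to K$ by $\Phi(f) = (f(x))_{x \in X}$; this is well defined because $|f(x)| \leq \|f\| \, \|x\| \leq \|x\|$ for $f \in B_{X^*}$, so indeed $f(x) \in D_x$. The key observation is that $\Phi$ is injective (a functional is determined by its values) and, crucially, is a homeomorphism onto its image when $B_{X^*}$ carries the $w^*$-topology and $K$ carries the product topology. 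This follows because the $w^*$-topology on $X^*$ is by definition $\sigma(X^*, X)$, the weakest topology making all the evaluation maps $f \mapsto f(x)$ continuous, and the product topology is likewise the weakest topology making all coordinate projections continuous; the two subbases correspond under $\Phi$.

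The main obstacle, and the real content of the proof, is to show that $\Phi(B_{X^*})$ is a \emph{closed} subset of the compact space $K$; a closed subset of a compact space is compact, and then the homeomorphism transfers compactness back to $B_{X^*}$. To identify the image, I would observe that a point $\omega = (\omega_x)_{x \in X} \in K$ lies in $\Phi(B_{X^*})$ precisely when the assignment $x \mapsto \omega_x$ is a linear functional (automatically of norm $\leq 1$, since $|\omega_x| \leq \|x\|$ forces the bound). Thus the image is cut out by the linearity conditions
\begin{equation*}
\omega_{x+y} = \omega_x + \omega_y \quad \text{and} \quad \omega_{\alpha x} = \alpha\, \omega_x,
\end{equation*}
ranging over all $x, y \in X$ and all scalars $\alpha$. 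Each such condition defines a closed set in $K$, because for fixed $x, y$ the maps $\omega \mapsto \omega_{x+y}$, $\omega \mapsto \omega_x$, $\omega \mapsto \omega_y$ are just coordinate projections, hence continuous, so the locus where $\omega_{x+y} - \omega_x - \omega_y = 0$ is the preimage of the closed set $\{0\}$ under a continuous map into $\mathbb{F}$; similarly for homogeneity. Therefore $\Phi(B_{X^*})$ is an intersection of closed sets, hence closed in $K$.

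To finish, I would assemble these pieces: $\Phi(B_{X^*})$ is a closed subset of the compact space $K$, hence compact in the product (equivalently subspace) topology; since $\Phi$ is a homeomorphism from $(B_{X^*}, w^*)$ onto its image, $B_{X^*}$ is $w^*$-compact. The verification that $\Phi$ is continuous with continuous inverse is routine once one matches the defining subbases, so I would state it cleanly rather than belabor it, and reserve the careful argument for the closedness of the image, which is where the linear-algebraic structure of $X^*$ genuinely enters.
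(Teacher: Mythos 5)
Your proposal is correct and follows the same overall architecture as the paper's proof: embed $(B_{X^*},w^*)$ into the compact product $\prod_{x\in X}D_x$ via the evaluation map, invoke Tychonoff, show the image is closed, and transfer compactness back. The one place you genuinely diverge is the closedness step, which you rightly identify as the real content. The paper argues by approximation: given $\xi$ in the closure of $\phi(B_{X^*})$, it picks $f_n\in B_{X^*}$ with $|f_n(u)-\xi_u|<\tfrac{1}{3n}$ and passes to the limit in an $\epsilon/3$ estimate to obtain linearity of $u\mapsto\xi_u$ (as written the paper demands this estimate at \emph{all} $u\in X$, which closure in the product topology does not supply; the argument is repaired by requiring it only at the three coordinates $x$, $y$, $\alpha x+\beta y$, which basic product neighbourhoods do provide). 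You instead exhibit the image as the intersection, over all $x,y$ and scalars $\alpha$, of the loci $\{\omega:\omega_{x+y}-\omega_x-\omega_y=0\}$ and $\{\omega:\omega_{\alpha x}-\alpha\omega_x=0\}$, each closed as the preimage of $\{0\}$ under a continuous finite combination of coordinate projections. This is cleaner: it avoids choosing approximants altogether and lets the finite-coordinate nature of the product topology do the work automatically. You are also more careful than the paper on the homeomorphism claim: the paper verifies only that $\phi$ is injective and continuous (via nets) and then asserts that $B^*$ and $\phi(B^*)$ are homeomorphic, which cannot be deduced from a continuous bijection without already knowing compactness of the domain; your observation that the defining subbases of $\sigma(X^*,X)$ and of the product topology correspond under $\Phi$ is the correct justification, and it is reasonable to record that verification as routine.
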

\begin{proof}
Let $x\in X$. Define $D_x={\{\lambda \in \mathbb K: |\lambda|\leq \|x\|}\}\subset \mathbb K$. Then $D_x$ is compact. By the Tychonoff's theorem the set $D=\Pi_{x\in X}D_x$ is compact in the product topology. Let $B^*$ denote the closed ball $B_{X^*}$ endowed with the $w^*$-topology. Define $\phi: B^*\rightarrow D$ by
\begin{equation*}
\phi(f)=(f(x))_{x\in X},\; \text{for all}\; f\in B^*.
\end{equation*}
We claim that $\phi$ is one-to-one and continuous. Clearly, $\phi$ is linear. If $(\phi(x))_{x\in X}=(0)$, then $f(x)=0$ for all $x\in X$. Hence $f=0$, that is, $\phi$ is one-to-one.

To show $\phi$ is continuous, let $(f_{\alpha})\subset B^*$ be such that $f_{\alpha}\xrightarrow{w^*}f$. Then $f_{\alpha}(x)\rightarrow f(x)$ for all $x\in X$. Consequently,
\begin{equation*}
\phi(f_{\alpha})=(f_{\alpha}(x))_{x\in X}\rightarrow (f(x))_{x\in X}=\phi(f).
\end{equation*}
This shows that $\phi$ is continuous.

If we show that $\phi(B^*)$ is  a closed subset of $D$ and $D$ being compact and Hausdorff, we can conclude that $\phi(B^*)$ is compact. So our next task is to show $\phi(B^*)$ is closed.
Let $\xi=(\xi_u)\in D$ be such that $\xi\in \overline{\phi(B^*)}$.  Define  $f: X\rightarrow \mathbb C$ by
\begin{equation*}
f(u)=\xi_{u},\; \text{for all}\; u\in X.
\end{equation*}
To show that $f$ is linear, let $x,y\in X$ and $\alpha, \beta \in \mathbb K$. Then for each $n\in \mathbb N$, choose $f_n\in B^*$ such that $|f(x)-f_n(x)|<\frac{1}{3n}$  for all $x\in X$. Thus,
\begin{equation}\label{linearityofcomponent}
|f_n(x)-f(x)|+|f_n(y)-f(y)|+|f_n(\alpha x+\beta y)-f(\alpha x+\beta y)|<\frac{1}{n}.
\end{equation}
Linearity of $f_n$ and Equation \ref{linearityofcomponent} imply that $$f(\alpha x+\beta y)=\alpha f(x)+\beta f(y).$$ That is $f$ is linear. Since $|\xi_x|\leq \|x\|$, we can conclude that $\|f\|\leq 1$ or $f\in B^*$. Now by the
definition of $f$, we can conclude that $\xi=\phi(f)\in \phi(B^*)$. Hence $\phi(B^*)$ is closed.

Since $B^*$ and $\phi(B^*)$ are homeomorphic, $B^*$ must be compact.
\end{proof}

\begin{thm}
Let $X$ be a normed linear space. Then $X$ is isometrically isomorphic to a subspace of $C(K)$, where $K=(B_{X^*}, \sigma(X^*,X))$.
\end{thm}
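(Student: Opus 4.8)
The plan is to use the canonical embedding $J\colon X\to X^{**}$ introduced earlier in the excerpt, but to regard each $j_x$ as a function on the unit ball $B_{X^*}$ rather than on all of $X^*$. Concretely, I would define $\Phi\colon X\to C(K)$ by
\[
(\Phi x)(f)=f(x),\qquad f\in K=B_{X^*}.
\]
Before anything else I would record that $K$ is genuinely a compact Hausdorff space: compactness is exactly Alaouglu's theorem proved just above, and $\sigma(X^*,X)$ is Hausdorff by the preceding remark. Hence $C(K)$ with the supremum norm is a bona fide Banach space, and the target of $\Phi$ makes sense.

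The first substantive step is to check that $\Phi x$ really lands in $C(K)$, i.e.\ that for each fixed $x$ the evaluation map $f\mapsto f(x)$ is $w^*$-continuous on $K$. This is immediate from the definition of $\sigma(X^*,X)$: the sets $\{f\in X^* : |f(x)-g(x)|<\epsilon\}$ are subbasic $w^*$-open sets, so $f\mapsto f(x)$ is continuous, being nothing but the restriction to $B_{X^*}$ of the element $j_x=J(x)\in X^{**}$. Linearity of $\Phi$ in $x$ is then clear, since $(\Phi(\alpha x+\beta y))(f)=f(\alpha x+\beta y)=\alpha f(x)+\beta f(y)$ by linearity of each $f\in X^*$.

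The heart of the argument is that $\Phi$ is isometric. For this I would compute
\[
\|\Phi x\|_{C(K)}=\sup_{f\in B_{X^*}}|(\Phi x)(f)|=\sup_{\|f\|\le 1}|f(x)|=\|x\|,
\]
where the last equality is precisely the Hahn--Banach consequence $\|j_x\|=\|x\|$ recorded just before the definition of the $w^*$-topology. In particular $\Phi$ is injective, so $\Phi$ is an isometric isomorphism of $X$ onto the linear subspace $\Phi(X)\subseteq C(K)$, which is exactly the assertion.

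I expect no serious obstacle: the only points requiring care are the appeal to Alaouglu and to the Hausdorff remark to guarantee that $C(K)$ is well defined, and the use of Hahn--Banach to convert the supremum over $B_{X^*}$ into $\|x\|$. If one additionally wanted $\Phi(X)$ to be a \emph{closed} subspace, one would need $X$ complete, so that $\Phi(X)$, being isometric to $X$, is itself complete and hence closed; but the statement claims only a subspace, so this refinement is unnecessary.
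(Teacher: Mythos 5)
Your proposal is correct and follows essentially the same route as the paper: your $\Phi$ is exactly the paper's map $\eta(x)=e_x=j_x|_{B_{X^*}}$, with linearity checked pointwise and the isometry obtained from the Hahn--Banach consequence $\sup_{\|f\|\le 1}|f(x)|=\|x\|$. If anything, you are more careful than the paper, since you explicitly verify the $w^*$-continuity of $f\mapsto f(x)$ on $K$ (so that $\Phi x$ really lies in $C(K)$) and explicitly invoke Alaouglu's theorem and the Hausdorff remark to justify that $C(K)$ is well defined, points the paper leaves implicit.
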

\begin{proof}
Let $x\in X$  and $j_x\in X^{**}$ be such that $j_{x}(f)=f(x)$ for all $f\in X^*$. Let $e_x=j_{x}|_{B_{X^*}}$. Then clearly, $\|e_{x}\|=\|j_{x}\|=\|x\|$, by a consequence of the Hahn-Banach extension theorem.
Now define a map $\eta: X\rightarrow C(K)$ by
\begin{equation*}
\eta(x)=e_{x},\; \text{for all}\; x\in X.
\end{equation*}
For $x,y\in X$, we have $e_{x+y}=e_x+e_y$ and $e_{\alpha x}=\alpha e_x$ for $\alpha \in \mathbb K$. Using these observations, we can conclude that $\eta$ is linear. Since $\|e_x\|=\|x\|$ for any $x\in X$, it follows that $\|\eta(x)\|=\|x\|$.
That is, $\eta$ is an isometry. This means that $X$ and $\eta(X)$ are isometrically isomorphic. Hence we can identify $X$ with $\eta(X)$, which is a subspace of $C(K)$.
\end{proof}

\section{Reflexivity}
Let $X$ be a normed linear space  and  $Y=X^*$. Then we can discuss $\sigma(Y^*,Y^{**})$, the $w$-topology on $Y^*=X^{**}$ and $\sigma(Y^*,Y)=\sigma(X^{**},X^*)$, the $w^*$-topology on $X^{**}$. As $X\subseteq X^{**}$, we can discuss the $w$-topology on $X$ induced by $X^{**}$ and the $w^*$-topology on $X$ induced by $X^*$. We have the following result.

\begin{thm}\label{weakstarclosureofball}
Let $X$ be a Banach space. Then $\overline{B_X}^{w^*}=B_{X^{**}}$ with respect to the $w^*$-closure of $X^{**}$ .
\end{thm}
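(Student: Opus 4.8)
The plan is to prove the two inclusions separately, the containment $\overline{B_X}^{w^*}\subseteq B_{X^{**}}$ being routine and the reverse containment requiring the Hahn--Banach separation theorem. Throughout I identify $B_X$ with its canonical image $J(B_X)\subseteq X^{**}$ and recall that the $w^*$-topology on $X^{**}$ is $\sigma(X^{**},X^*)$. For the easy inclusion, note that since $J$ is an isometry we have $\|Jx\|\le 1$ whenever $x\in B_X$, so $J(B_X)\subseteq B_{X^{**}}$; and $B_{X^{**}}$, being the closed unit ball of the dual of the normed space $X^*$, is $w^*$-compact by Alaoglu's theorem, hence $w^*$-closed because $\sigma(X^{**},X^*)$ is Hausdorff. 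Passing to the $w^*$-closure of $J(B_X)$ therefore stays inside $B_{X^{**}}$.

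For the reverse inclusion I would argue by contradiction. Suppose some $\psi\in B_{X^{**}}$ does not lie in $C:=\overline{B_X}^{w^*}$. The set $C$ is convex, being the $w^*$-closure of the convex set $J(B_X)$, and it is $w^*$-closed by construction. The key structural fact I would invoke is that the topological dual of the locally convex space $(X^{**},\sigma(X^{**},X^*))$ is exactly $X^*$: every $w^*$-continuous linear functional on $X^{**}$ has the form $\Phi\mapsto \Phi(f)$ for a unique $f\in X^*$. Applying the Hahn--Banach separation theorem in this locally convex setting to the disjoint convex sets $\{\psi\}$ and $C$, I obtain $f\in X^*$ and a real constant $c$ with $\mathrm{Re}\,(Jx)(f)\le c<\mathrm{Re}\,\psi(f)$ for every $x\in B_X$.

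It then remains to extract a contradiction from this separating functional. Since $(Jx)(f)=f(x)$, and since rotating $x$ by a unimodular scalar shows $\sup_{x\in B_X}\mathrm{Re}\,f(x)=\sup_{x\in B_X}|f(x)|=\|f\|$, the left-hand inequality forces $\|f\|\le c$. On the other hand $\mathrm{Re}\,\psi(f)\le|\psi(f)|\le\|\psi\|\,\|f\|\le\|f\|$ because $\psi\in B_{X^{**}}$, so the right-hand inequality gives $c<\|f\|$. These two estimates are incompatible, so no such $\psi$ exists and $B_{X^{**}}\subseteq C$. Combining the two inclusions yields $\overline{B_X}^{w^*}=B_{X^{**}}$.

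I expect the main obstacle to be the separation step rather than the arithmetic. The separation theorem quoted earlier is phrased for the norm topology of a real Banach space, whereas here I must separate in the weak-star topology; the honest way to do this is to use that $(X^{**},\sigma(X^{**},X^*))$ is a Hausdorff locally convex space with dual $X^*$ and apply the geometric Hahn--Banach theorem for such spaces (separating a point from a disjoint $w^*$-closed convex set by a $w^*$-continuous functional). Identifying that dual with $X^*$ is precisely what guarantees the separating functional is evaluation at some $f\in X^*$, which is what makes the final contradiction go through; handling the complex scalar case cleanly by passing to real parts is a minor additional bookkeeping point.
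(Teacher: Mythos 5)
Your proof is correct, but it takes a genuinely different route from the paper's. You separate directly in the locally convex space $(X^{**},\sigma(X^{**},X^*))$: after noting that the dual of this space is exactly $X^*$, you apply the geometric Hahn--Banach theorem for Hausdorff locally convex spaces to split the point $\psi$ from the $w^*$-closed convex set $\overline{B_X}^{w^*}$ by evaluation at some $f\in X^*$, and derive the contradiction from $\sup_{x\in B_X}\mathrm{Re}\,f(x)=\|f\|$ against $\mathrm{Re}\,\psi(f)\le\|\psi\|\,\|f\|\le\|f\|$. The paper instead exploits the concrete form of a basic $w^*$-neighbourhood $W$ of the offending point $\xi_0$: since $W$ is determined by finitely many $f_1,\dots,f_n\in X^*$ and an $\epsilon>0$, the map $\Phi(x)=\bigl(f_1(x),\dots,f_n(x)\bigr)$ transports the problem into $(\mathbb{R}^n,\|\cdot\|_\infty)$, where $W\cap B_X=\emptyset$ becomes $\|\Phi(x)-\alpha\|_\infty\ge\epsilon$ with $\alpha=(f_i(\xi_0))_i$, and the separation of $\{\alpha\}$ from the convex set $\Phi(B_X)$ happens in finite dimensions, producing $\sum_i\beta_i f_i$ in the role of your $f$ and the same norm-estimate contradiction via $\|\xi_0\|\le 1$. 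What your approach buys is conceptual economy --- it is the standard proof of Goldstine's theorem and makes the duality $(X^{**},\sigma(X^{**},X^*))^*=X^*$ explicit --- but it rests on two facts the paper never states (that identification of the dual, and a separation theorem valid for locally convex topologies; the paper's stated Hahn--Banach separation theorem is formulated only for the norm topology of a real Banach space, a gap you rightly flag and repair). What the paper's reduction buys is the avoidance of exactly that machinery: the only separation it needs is between a point and a closed convex set at positive distance in $\mathbb{R}^n$, which is elementary. Your treatment of the easy inclusion (Alaoglu plus Hausdorffness of the $w^*$-topology) and of complex scalars via real parts and unimodular rotations is correct, and in fact slightly more careful than the paper, which tacitly works over $\mathbb{R}$.
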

\begin{proof}
First we show  that $B_X\subseteq B_{X^{**}}$.  Let $x\in B_{X}$. Then $j_x\in X^{**}$ and since $\|j_x\|=\|x\|$, it follows that $j_x\in B_{X^{**}}$. There fore $B_{X}\subseteq B_{X^{**}}$.
Since $B_{X^{**}}$ is $w^*$-compact, it follows that $\overline{B_{X^{**}}}^{w^*}=B_{X^{**}}$ with respect to the $w^*$ topology of $X^{**}$. Hence $\overline{B_{X}}^{w^*}\subset B_{X^{**}}$.

Assume that $B_{X}$ is not $w^*$-dense in $B_{X^{**}}$. Then there exists $\xi_0\in B_{X^{**}}$ and a $w^*$-neighbourhood $W$ of $\xi_0$ such that $W\cap B_{X}=\emptyset$. Note that $W={\{\xi\in X^{**}: |f_i(\xi)-f_i(\xi_0)|<\epsilon}\}$ for some $\epsilon>0$ and $f_i\in X^{*}$, for $i=1,2,\dots,n$. As $W\cap B_{X}=\emptyset$, there exists $i\in{\{1,2,\dots,n}\}$ such that
\begin{equation}\label{weakstardenseeqn}
  |f_i(x)-f_i(\xi_0)|\geq \epsilon,\quad (x\in X).
\end{equation}

Define $\Phi: X\rightarrow (\mathbb R^n,\|\cdot\|_{\infty})$ by
\begin{equation}
\Phi(x)=\Big(f_1(x),f_2(x),\dots,f_n(x) \Big),\; \text{for all}\; x\in X.
\end{equation}
Let $\alpha=(f_1(\xi_0),f_2(\xi_0),\dots, f_n(\xi_0))$. Then by Equation \ref{weakstardenseeqn}, we have that $\|\Phi(x)-\alpha\|_{\infty}\geq \epsilon$. That is $\alpha \notin \overline{\Phi(X)}$. Hence ${\{\alpha}\}$ and $\phi(X)$ can be separated by a hyperplane in $\mathbb R^n$. Hence by the Hahn-Banach Separation theorem, there exists $(\beta_1,\beta_2,\dots, \beta_n)\in \mathbb R^{n}$ ( As $(\mathbb R^{n})^*\simeq \mathbb R^{n}$ ) and $\beta \in \mathbb R^{n} $ such that
\begin{equation}\label{hyperplaneseparation}
\displaystyle \sum_{i=1}^n \beta_if_i(x)< \beta< \displaystyle \sum_{i=1}^n\beta_if_i(\xi_0)\leq \left\|\displaystyle \sum_{i=1}^n \beta_if_i\right\|.
\end{equation}

This implies that
\begin{equation*}
\left\|\displaystyle \sum_{i=1}^n \beta_if_i\right\|< \beta <  \left\|\displaystyle \sum_{i=1}^n \beta_if_i\right\|,
\end{equation*}
a contradiction. Hence our assumption that $W\cap B_{X}=\emptyset$ is wrong. This completes the proof.
\end{proof}
\begin{thm}
Let $X$ be normed linear space over $\mathbb R$.  Then $X$ is reflexive if and only if $B_{X}$ is $w$-compact.
\end{thm}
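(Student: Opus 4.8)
The plan is to use the canonical embedding $J\colon X\to X^{**}$ of Section 3 together with Alaouglu's theorem and the Goldstine-type density result of Theorem \ref{weakstarclosureofball}. The conceptual core I would isolate first is a topological lemma: $J$ is a homeomorphism from $(X,\sigma(X,X^*))$ onto its image $J(X)$, where $J(X)$ carries the subspace topology inherited from $(X^{**},\sigma(X^{**},X^*))$. To see this I would compare subbasic open sets. A subbasic weak-open set in $X$ has the form $V(x_0,f,\epsilon)=\{x\in X:|f(x)-f(x_0)|<\epsilon\}$ with $f\in X^*$, while a subbasic $w^*$-open set in $X^{**}$ has the form $\{\xi\in X^{**}:|\xi(f)-\xi_0(f)|<\epsilon\}$. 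Since $f(x)=(Jx)(f)=j_x(f)$ for every $x\in X$, the image $J(V(x_0,f,\epsilon))$ equals $J(X)\cap\{\xi:|\xi(f)-(Jx_0)(f)|<\epsilon\}$; thus $J$ carries subbasic weak-open sets to relatively $w^*$-open sets and conversely, which is exactly what is needed for $J$ to be a homeomorphism onto $J(X)$. Because $J$ is isometric, we also record $J(B_X)=B_{X^{**}}\cap J(X)$, so in particular $J(B_X)\subseteq B_{X^{**}}$.

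For the forward implication, suppose $X$ is reflexive, so $J(X)=X^{**}$ and the homeomorphism above identifies $(X,\sigma(X,X^*))$ with all of $(X^{**},\sigma(X^{**},X^*))$, carrying $B_X$ onto $B_{X^{**}}$. Applying Alaouglu's theorem to the normed space $X^*$, whose dual is $X^{**}$ and whose $w^*$-topology $\sigma(X^{**},X^*)$ is precisely the one above, shows that $B_{X^{**}}$ is $w^*$-compact. Transporting this compactness back through the homeomorphism $J^{-1}$ yields that $B_X$ is $w$-compact.

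For the converse, assume $B_X$ is $w$-compact. Then $J(B_X)$ is the continuous image of a compact set under $J$, hence $w^*$-compact in $X^{**}$; since $\sigma(X^{**},X^*)$ is Hausdorff, $J(B_X)$ is $w^*$-closed. On the other hand, identifying $B_X$ with $J(B_X)\subseteq X^{**}$, Theorem \ref{weakstarclosureofball} gives $\overline{J(B_X)}^{w^*}=B_{X^{**}}$ (its proof uses only Hahn--Banach separation in $\mathbb R^n$, so completeness of $X$ plays no role and the statement applies verbatim here). Combining the two facts, $J(B_X)=B_{X^{**}}$. Homogeneity then upgrades this to surjectivity of $J$: any nonzero $\xi\in X^{**}$ satisfies $\xi/\|\xi\|\in B_{X^{**}}=J(B_X)$, so $\xi/\|\xi\|=Jx$ for some $x\in B_X$, whence $\xi=J(\|\xi\|\,x)\in J(X)$. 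Thus $J(X)=X^{**}$ and $X$ is reflexive.

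The step I expect to be the main obstacle is the topological lemma showing that $J$ intertwines the weak topology on $X$ with the $w^*$-topology on $X^{**}$; once that is in hand, the remainder is a matter of correctly invoking Alaouglu's theorem and Theorem \ref{weakstarclosureofball} and of playing compactness, $w^*$-closedness (via Hausdorffness), and $w^*$-density against one another. A secondary point requiring care in the converse is the identification of the abstract compact set $J(B_X)$ with the copy of $B_X$ inside $B_{X^{**}}$ that appears in Theorem \ref{weakstarclosureofball}, so that the closedness and density conclusions can be combined.
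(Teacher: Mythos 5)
Your proposal is correct and takes essentially the same route as the paper: the forward direction transports Alaouglu's theorem for $B_{X^{**}}$ back through the canonical identification of $\sigma(X,X^*)$ with $\sigma(X^{**},X^*)$, and the converse plays $w^*$-compactness (hence $w^*$-closedness, by Hausdorffness) of the image of $B_X$ against the density statement of Theorem \ref{weakstarclosureofball} to force $B_X=B_{X^{**}}$ and then $X=X^{**}$. Your explicit subbasis-level lemma that $J$ is a weak-to-$w^*$ homeomorphism onto $J(X)$, and your observation that Theorem \ref{weakstarclosureofball} does not actually use completeness (so it applies to a normed space), merely make precise steps the paper leaves implicit.
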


\begin{proof}
If $X$ is reflexive, then the weak topology and the weak star topology on $X^*$ same. By the Banach-Alaoglu theorem $B_{X^{**}}=B_{X}$ is $w^*$-compact. Hence $B_{X}$ is $w$-compact.

On the other hand, if $B_{X}$ is $w$-compact, then $B_{X}$ is $w^*$-compact and hence it must be $w^*$-closed. But by Theorem \ref{weakstarclosureofball}, $B_{X}$ is $w^*$ dense in $B_{X^{**}}$. Thus, $B_{X^{**}}=B_{X}$. Thus, we can conclude $X=X^{**}$.
\end{proof}
\begin{cor}\label{reflexivesubsp}
Let $Y$ be a closed subspace of a reflexive space $X$. Then $Y$ is reflexive.
\end{cor}
\begin{proof}
To show $Y$ is reflexive, it is enough to show that $B_Y$ is $w$-compact. Note that $B_Y=B_{X}\cap Y$ is closed and convex. So it is weakly closed subset of a weakly compact set $B_{X}$, hence it must be weakly compact with respect to $\sigma(X,X^*)$. But $\sigma(X,X^*)|_{Y}=\sigma(Y,Y^*)$, by the Hahn-Banach extension theorem. This concludes the proof.
  \end{proof}

  \begin{thm}
    Let $X$ be a Banach space. Then $X$ is reflexive if and only if $X^*$ is reflexive.
  \end{thm}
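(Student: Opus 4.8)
The plan is to prove both implications through the canonical embeddings $J_X\colon X\to X^{**}$ and $J_{X^*}\colon X^*\to X^{***}$, recalling that $X$ (resp.\ $X^*$) is reflexive precisely when $J_X$ (resp.\ $J_{X^*}$) is onto. The forward implication is the routine one; the reverse implication carries the real content and is where I expect the obstacle to sit.

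For the forward direction, assume $X$ is reflexive, so that $J_X$ is a surjective isometry. I would give a direct construction: given an arbitrary $\Lambda\in X^{***}=(X^{**})^*$, set $f:=\Lambda\circ J_X$, which lies in $X^*$ since it is a composition of bounded linear maps. I then claim $J_{X^*}(f)=\Lambda$. To verify this, fix $\xi\in X^{**}$ and use surjectivity of $J_X$ to write $\xi=J_X(x)$ for some $x\in X$; then $J_{X^*}(f)(\xi)=\xi(f)=J_X(x)(f)=f(x)=\Lambda(J_X(x))=\Lambda(\xi)$, so the two functionals agree on all of $X^{**}$. Hence $J_{X^*}$ is onto and $X^*$ is reflexive. (Alternatively, since $X$ is reflexive the $w$- and $w^*$-topologies on $X^*$ coincide, so the Banach--Alaoglu theorem, which gives $w^*$-compactness of $B_{X^*}$, immediately yields $w$-compactness of $B_{X^*}$, and the characterization of reflexivity by weak compactness of the unit ball finishes it.)

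For the reverse direction, assume $X^*$ is reflexive; applying the forward direction to $X^*$ shows $X^{**}$ is reflexive as well. I would argue by contradiction: if $X$ were not reflexive, then $J_X(X)$ is a proper subspace of $X^{**}$, and it is closed because $J_X$ is an isometry and $X$ is complete. By the Hahn--Banach separation theorem there is a nonzero $\Lambda\in X^{***}$ with $\Lambda|_{J_X(X)}=0$. Since $X^*$ is reflexive, $\Lambda=J_{X^*}(f)$ for some $f\in X^*$, meaning $\Lambda(\xi)=\xi(f)$ for all $\xi\in X^{**}$. Evaluating on $\xi=J_X(x)$ gives $0=\Lambda(J_X(x))=J_X(x)(f)=f(x)$ for every $x\in X$, so $f=0$ and therefore $\Lambda=0$, contradicting $\Lambda\neq 0$. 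Thus $X$ must be reflexive.

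The main obstacle is precisely in the reverse direction: it is tempting to try to deduce reflexivity of $X$ by applying Corollary \ref{reflexivesubsp} to the closed subspace $J_X(X)$ of the reflexive space $X^{**}$, but this only yields that $J_X(X)$ is reflexive as an abstract Banach space, which is not the same as the required equality $J_X(X)=X^{**}$. The separation argument above is what actually forces $J_X(X)$ to exhaust $X^{**}$, and its essential input is the completeness of $X$, which guarantees that $J_X(X)$ is norm-closed so that Hahn--Banach produces a genuinely nonzero annihilating functional.
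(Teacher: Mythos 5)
Your proof is correct, but in the reverse direction it takes a genuinely different route from the paper's --- and, notably, the route you dismiss as a trap is precisely the one the paper uses. In the forward direction you and the paper agree in substance, though you are more careful: the paper compresses it to ``$X^*=(X^{**})^*=(X^*)^{**}$'', while you actually verify that the canonical embedding $J_{X^*}$ is onto by computing $J_{X^*}(\Lambda\circ J_X)=\Lambda$, which is the right thing to check (mere isometric identification of $X^{***}$ with $X^*$ would not by itself give reflexivity). In the reverse direction the paper argues: $X^*$ reflexive implies $X^{**}$ reflexive by the forward part, and then $X$, viewed via $J_X$ as a closed subspace of $X^{**}$, is reflexive by Corollary \ref{reflexivesubsp}. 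Your objection --- that this only yields reflexivity of $J_X(X)$ as an abstract Banach space, not the equality $J_X(X)=X^{**}$ --- identifies a real subtlety (a space isometrically isomorphic to its bidual need not be reflexive; James's space is the classical example), but it does not invalidate the paper's argument: reflexivity is invariant under isometric isomorphism, via the identity $J_Y\circ T=T^{**}\circ J_X$ for an isomorphism $T\colon X\to Y$, so reflexivity of the closed subspace $J_X(X)$ does transfer to $X$, and reflexivity of $X$ means exactly that $J_X$ is onto. The paper silently glosses over this transfer; your annihilator argument sidesteps it entirely: since $X$ is complete, $J_X(X)$ is a closed proper subspace if $X$ is not reflexive, Hahn--Banach produces a nonzero $\Lambda\in X^{***}$ vanishing on it, and reflexivity of $X^*$ writes $\Lambda=J_{X^*}(f)$, forcing $f=0$ and hence $\Lambda=0$, a contradiction. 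Your route is self-contained, makes the role of completeness of $X$ explicit, and in fact uses only the reflexivity of $X^*$ --- your opening sentence deducing that $X^{**}$ is reflexive is never used and can be deleted; the paper's route buys brevity by reusing Corollary \ref{reflexivesubsp}, at the cost of an unstated identification lemma.
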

  \begin{proof}
    Suppose $X$ is reflexive. As $X=X^{**}$, the bounded linear functional on $X$ and the bounded linear functionals on $X^{**}$ are the same. That is $X^*=(X^{**})^{*}=(X^*)^{**}$. So $X^*$ is reflexive.
    On the other hand, if $X^*$ is reflexive, then by the above argument $X^{**}$ is reflexive. Since $X$ is a closed subspace of $X^{**}$, by Corollary \ref{reflexivesubsp}, $X$ is reflexive.
  \end{proof}

\begin{cor}
  Let $X$ be a reflexive Banach space and $C$ be a closed, convex, bounded set in $X$. Then $X$ is weakly compact.
\end{cor}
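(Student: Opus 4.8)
The statement as printed concludes ``$X$ is weakly compact,'' but a nonzero normed space is norm-unbounded and hence never weakly compact; the intended conclusion is plainly that \emph{$C$} is weakly compact, and that is what I would prove.

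The plan is to exhibit $C$ as a weakly closed subset of a weakly compact set and then invoke the fact that a closed subset of a compact space is compact. First, since $C$ is bounded there is an $r>0$ with $C\subseteq rB_X$. Because $X$ is reflexive, the characterization of reflexivity established above (namely that $X$ is reflexive if and only if $B_X$ is $w$-compact) gives that $B_X$ is $w$-compact. The dilation $x\mapsto rx$ is a homeomorphism of $(X,\sigma(X,X^*))$ onto itself: it is weakly continuous since $f(rx)=rf(x)$ is continuous for each $f\in X^*$, and its inverse $x\mapsto r^{-1}x$ is of the same form. Consequently $rB_X$ is $w$-compact as well.

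Next I would show $C$ is weakly closed. By hypothesis $C$ is convex and norm-closed, so Proposition \ref{wclosedsets} immediately yields that $C$ is weakly closed. Hence $C$ is a weakly closed subset of the weakly compact set $rB_X$, and since a closed subset of a compact topological space is compact, $C$ is $w$-compact, as required.

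The argument is routine given the machinery already in place; the only points needing a word of care are the observation that scaling is a weak homeomorphism (so that weak compactness transfers from $B_X$ to $rB_X$) and the application of Proposition \ref{wclosedsets}, which is exactly the step that upgrades the norm-closedness hypothesis to the weak-closedness required by the compactness argument. The structure parallels the proof of Corollary \ref{reflexivesubsp}, where a norm-closed convex subset of the weakly compact unit ball was shown to inherit weak compactness.
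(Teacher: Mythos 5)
Your proof is correct and follows essentially the same route as the paper's: use Proposition \ref{wclosedsets} to upgrade norm-closedness of the convex set $C$ to weak closedness, enclose $C$ in a ball of radius $r$ which is weakly compact by reflexivity, and conclude via the fact that a closed subset of a compact space is compact. You rightly flag the typo in the statement (the conclusion should read ``$C$ is weakly compact'') and your explicit check that dilation is a weak homeomorphism fills in a small step the paper leaves implicit; otherwise the two arguments coincide.
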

\begin{proof}
  Since $C$ is closed and convex, it must be weakly closed. Boundedness of $C$ implies that there exists $r>0$ such that $C\subset \bar{B}(0,r)={\{x\in X: \|x\|\leq r}\}$. As $\bar{B}(0,r)$ is weakly compact and $C$ being closed subset of $\bar{B}(0,r)$, it must be weakly compact.
\end{proof}
\begin{defn}
  Let $X$ be a normed linear space and $0\neq f\in X^*$. Then $f$ is said to be norm attaining if there exists $x_0\in S_{X}={\{x\in X: \|x\|=1}\}$
  such that $\|f\|=|f(x_0)|$.
\end{defn}

Note that if $X$ is a finite dimensional normed linear space , then every $f\in X^*$ attains its norm. This is because the unit ball $B_X$ is compact.

We have the following characterization of infinite dimensional reflexive spaces.

\begin{thm}[R. C. James]
  A Banach space $X$ is reflexive if and only if every $f\in X^*$ attains its norm.
\end{thm}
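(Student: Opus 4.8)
The plan is to prove the two implications separately; the forward direction is routine, while the converse is the deep content of the theorem.

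\textbf{The easy direction (reflexive $\Rightarrow$ norm-attaining).} Suppose $X$ is reflexive and fix $0\neq f\in X^*$; for definiteness take real scalars, as in the separation theorem quoted above. First I would observe that, by the very construction of $\sigma(X,X^*)$, every member of $X^*$ is weakly continuous, so $f$ is $\sigma(X,X^*)$-continuous, and since $f$ is real-linear we have $\|f\|=\sup_{x\in B_X}f(x)$ (replacing $x$ by $-x$ removes the absolute value). By the characterization of reflexivity through $w$-compactness of $B_X$ established above, the ball $B_X$ is weakly compact, and a continuous real function on a compact space attains its supremum; hence there is $x_0\in B_X$ with $f(x_0)=\|f\|$. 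Finally I would check that $\|x_0\|=1$: if $\|x_0\|<1$ then $x_0/\|x_0\|\in B_X$ would give a strictly larger value of $f$, contradicting maximality. Thus $f$ attains its norm at $x_0\in S_X$.

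\textbf{The converse, set-up.} For the converse I would argue by contraposition: assuming $X$ is \emph{not} reflexive, I would manufacture an $f\in S_{X^*}$ that fails to attain its norm on $B_X$. Non-reflexivity means $J(X)\subsetneq X^{**}$; since $J$ is an isometry and $X$ is complete, $J(X)$ is a closed proper subspace, so one can fix $\phi\in X^{**}$ with $\|\phi\|=1$ and $\delta:=\mathrm{dist}(\phi,J(X))>0$. The underlying idea is that the positive "gap" $\delta$ between $\phi$ and $J(X)$, together with the $w^*$-density of $J(B_X)$ in $B_{X^{**}}$ supplied by Theorem \ref{weakstarclosureofball}, leaves exactly enough room to construct a functional whose supremum over $B_X$ is approached but never reached.

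\textbf{The core construction — the main obstacle.} The technical heart is James's lemma. Fixing $\theta\in(0,1)$ close to $1$, I would recursively construct a sequence $(f_n)\subset B_{X^*}$ together with auxiliary points $(y_m)\subset B_X$ obeying a "staircase" (triangular) system of inequalities: each newly chosen $f_n$ nearly annihilates the previously chosen points, while the earlier functionals already evaluate the new points with value at least $\theta$. One then forms a functional as an infinite weighted combination $f=\sum_n\lambda_n f_n$ with $\lambda_n>0$ and $\sum_n\lambda_n=1$; the staircase estimates force $\|f\|$ to equal a value near $\theta$ that is simultaneously the supremum of $f$ over $B_X$, realized in the limit as $f(y_m)\to\|f\|$. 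The same estimates, however, show that any candidate maximizer $x\in B_X$ would have to score near the maximum against infinitely many of the $f_n$ at once — precisely the $c_0$-type obstruction, where attaining the sum would require a nonvanishing coordinate at every stage. Consequently there is always an unused tail contribution, the supremum is approached along the construction but attained at no single point, and $f$ does not attain its norm. Carrying out the recursion, controlling the norms of the infinite combinations, and making the non-attainment rigorous is exactly where the difficulty concentrates; this is the one genuinely hard step of the theorem, and every separation estimate in it can be routed through the finite-dimensional Hahn--Banach argument already used in the proof of Theorem \ref{weakstarclosureofball}, while the rest of the argument is bookkeeping.
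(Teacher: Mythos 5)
The paper states this theorem without proof, attributing it to R.~C.~James, so your attempt must be measured against the standard arguments rather than against anything in the text. Your easy direction is correct and essentially complete, and it uses exactly the machinery the paper develops: reflexivity gives $w$-compactness of $B_X$ (the characterization proved via Theorem \ref{weakstarclosureofball}), every $f\in X^*$ is $\sigma(X,X^*)$-continuous by construction of the weak topology, a continuous real function on a compact space attains its supremum, and your scaling argument correctly places the maximizer on $S_X$. The restriction to real scalars is harmless: $\|f\|=\sup_{x\in B_X}|f(x)|$ and $|f|$ is weakly continuous, or one multiplies a near-maximizer by a unimodular scalar.

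The converse, however, is where the entire content of the theorem lives, and what you have written there is a roadmap, not a proof. The recursive construction of $(f_n)\subset B_{X^*}$ and $(y_m)\subset B_X$ is never specified: you do not state the triangular system of inequalities, the quantifiers, or --- crucially --- why the recursion can be continued at every stage, which is precisely the point at which non-reflexivity (through Goldstine's theorem and a Helly-type finite-intersection argument) must be invoked quantitatively. The choice of weights $\lambda_n$, the identity $\|f\|=\sup_{B_X}f=\lim_m f(y_m)$, and the non-attainment conclusion are asserted rather than derived; ``the $c_0$-type obstruction'' is a metaphor standing in for the one deduction that actually needs to be made, namely that any $x\in B_X$ with $f(x)=\|f\|$ violates the staircase estimates. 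Your closing claim --- that every separation step can be routed through the finite-dimensional Hahn--Banach argument used in Theorem \ref{weakstarclosureofball} and that ``the rest is bookkeeping'' --- is not accurate: all known proofs (James's original argument, Pryce's simplification, or the route through Simons's sup-limsup inequality) require genuinely new ideas beyond Goldstine's theorem. A telling symptom is completeness: the theorem is \emph{false} for incomplete normed spaces (James himself constructed a non-complete space in which every functional attains its norm), so completeness must enter the recursion in an essential way, whereas your sketch uses it only to note that $J(X)$ is closed. As it stands, the hard direction is a correct description of James's strategy but contains a genuine gap where the proof should be.
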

\begin{center}
Exercises
\end{center}
\begin{enumerate}
  \item Let $(x_n)$ be a sequence in a Banach space $X$. Show that $x_n\xrightarrow{w}x$ if and only if $(x_n)$ is bounded and the set ${\{f\in X^*: f(x_n)\rightarrow f(x)}\}$ is   dense in $X^*$.
  \item Let $X$ be a normed linear space and $(x_n)\subset X$. If $(x_n)$ is Cauchy and $x_n\xrightarrow{w}0$, then show that $x_n\rightarrow 0$.
  \item If $X$ is an infinite dimensional normed linear space, show that $0\in \overline{S_{X}}^{w}$.
  \item Let ${\{e_n:n\in \mathbb N}\}$ be an orthonormal basis for a Hilbert space $H$, then $e_n\xrightarrow{w}0$.
  \item Let $X=\ell^1\cong c_0^{*}$. Show that $e_n\xrightarrow{w^*}0$ but not $e_n\xrightarrow{w}0$.
  \item Show ${\{e_n:n\in \mathbb N}\}$, the standard orthonormal basis for $\ell^2$ is closed but not weakly closed. Show that ${\{e_n:n\in \mathbb N}\}\cup{\{0}\}$ is weakly compact in $\ell^2$.
  \item Let $f_n\in C[0,1]$ with $\|f_n\|_{\infty}\leq 1$ for each $n\in \mathbb N$. Show that $f_n\xrightarrow{w}o$ if and only if $f_n(x)\rightarrow 0$ for every $x\in [0,1]$.
  \item Let $X$ be a Banach space. Assume that $x_n\rightarrow x$ and $f_n\in X^*$ such that $f_n\xrightarrow{w^*}f$. Show that $f_n(x_n)\rightarrow f(x)$ as $n\rightarrow \infty$.

      \item Let $X$ be a Banach space and let $Y$ be a closed subspace of $X$. Show that the weak topology on $Y$ is the topology    induced on $Y$ by the weak topology on $X$.

\item Let $X$ be a Banach space and let $Y$ be a subspace of $X$. Show that the closure of $Y$ under the weak topology coincide with $\overline{Y}$, the closure of $Y$ in the norm topology.

\item Let $C[a,b]$ be the space of all continuous functions on $[a,b]$. For each $x\in [a,b]$ the map $E_x (f)=f(x)$ is a map from $C[a,b]$ to $\mathcal{C}$ called the evaluation map. prove that the weak topology generated by $\{E_x:x\in [a,b]\}$ is same as the topology  of pointwise convergence on $C[a,b]$.

\item Let $X$ be a normed linear space and $S=\{f\in X^* : || f ||=1 \}$. Let $(x_n)$ be a sequence in $X$. Show that $(x_n)$ converges weakly to $x\in X$ iff $f(x_n)$ converges $f(x)$ for every $f\in S$.

\item Let $(x_n)$ be a sequence in a normed linear space $X$ and let $x\in X$ show that if $Ref(x_n)$ converges to $Ref(x)$ for every $f\in X^*$ then $(x_n)$ converges weakly to $x$.

\item Prove that weak topology on $l_p$, $1<p<\infty$ cannot be obtained from any metric.

\item In $l_p$, $1\leq p \leq \infty$,  prove that weak convergence implies coordinate convergence. What about the converse?.

\item Let $(f_n)$ be a sequence in $X^*$. Show that $f_n\xrightarrow{w^*}f$ if and only if $(f_n)$ is bounded and the set ${\{x\in X: f_n(x)\rightarrow f(x)}\}$ is   dense in $X$.
\item Let $X$ be a normed linear space. Show that $\overline{S_X}^{w}=B_X$.
\end{enumerate}

\end{document}